\tikzset{vertex/.style={circle,draw,fill,inner sep=0pt,minimum size=1mm}}
\theoremstyle{plain}
\newtheorem{thm}{Theorem}
\newtheorem{prop}[thm]{Proposition}
\newtheorem{cor}[thm]{Corollary}
\newtheorem{remark}[thm]{Remark}
\theoremstyle{definition}
\newtheorem{definition}[thm]{Definition}
\newtheorem{exl}[thm]{Example}
\numberwithin{thm}{section}
\newcommand{\adj}{\leftrightarrow}
\DeclareMathOperator{\id}{id}
\DeclareMathOperator{\MF}{MF}
\DeclareMathOperator{\XF}{XF}
\DeclareMathOperator{\ind}{ind}
\def\Z{{\mathbb Z}}
\def\N{{\mathbb N}}
\def\R{{\mathbb R}}
\begin{document}
\title{Remarks on Fixed Point Assertions in Digital Topology}
\author{Laurence Boxer
         \thanks{
    Department of Computer and Information Sciences,
    Niagara University,
    Niagara University, NY 14109, USA;
    and Department of Computer Science and Engineering,
    State University of New York at Buffalo.
    E-mail: boxer@niagara.edu
    }
    \and{
    P. Christopher Staecker
    \thanks{Department of Mathematics, Fairfield University, Fairfield, CT 06823-5195, USA.
    E-mail: cstaecker@fairfield.edu
    }
    }
}
\date{ }
\maketitle

\begin{abstract}
Several recent papers in digital topology have sought to
obtain fixed point results by mimicking the use of tools
from classical topology, such as complete metric spaces
and homotopy invariant fixed point theory. We show that
in many cases, researchers using these tools have derived
conclusions that are incorrect or trivial.

Key words and phrases: digital topology, fixed point,
metric space
\end{abstract}

\section{Introduction}
Recent papers have attempted to apply to digital images
ideas from Euclidean topology and real analysis concerning 
metrics and fixed points. While the underlying motivation
of digital topology comes from Euclidean topology and real
analysis, some applications recently featured in the 
literature seem of doubtful worth. Although papers
including~\cite{Rosenfeld87,Bx-etal} have valid and
interesting results for fixed points and for ``almost" or
``approximate" fixed points in digital topology,
many other published assertions concerning fixed points in 
digital topology are incorrect or trivial (e.g., applicable
only to singletons, or functions studied forced to be 
constant), as we will discuss in the current paper.

\section{Preliminaries}
We let $\Z$ denote the set of integers, and $\R$, the real line.

We consider a digital image as a graph $(X,\kappa)$, where 
$X \subset \Z^n$ for some positive integer $n$ and
$\kappa$ is an adjacency relation on $X$. 

A digital metric space is~\cite{EgeKaraca} a triple
$(X,d,\kappa)$ where $(X,\kappa)$ is a digital image and
$d$ is a metric for $X$. In~\cite{EgeKaraca}, $d$ was taken
to be the Euclidean metric, but we will not limit our
discussion to the Euclidean metric.

The {\em diameter} of a metric space $(X,d)$
is 
\[diam \, X = \max \{d(x,y) \, | \, x,y \in X\}.\]

\subsection{Adjacencies}
The most commonly used adjacencies for digital images
are the $c_u$-adjacencies, defined as follows.
\begin{definition}
Let $p,q \in \Z^n$, $p=(p_1,\ldots,p_n)$, 
$q=(q_1,\ldots,q_n)$, $p \ne q$. Let $1 \le u \le n$. We say
$p$ and $q$ are $c_u$-adjacent, denoted
$p \adj_{c_u} q$ or $p \adj q$ when the adjacency is
understood, if
\begin{itemize}
    \item for at most $u$ distinct indices $i$,
          $|p_i - q_i| = 1$, and
    \item for all other indices $j$, $p_j = q_j$.
\end{itemize}
\end{definition}

Often, a $c_u$-adjacency is denoted by the number
of points in $\Z^n$ that are $c_u$-adjacent to a given
point. E.g.,
\begin{itemize}
    \item in $\Z^1$, $c_1$-adjacency is 2-adjacency;
    \item in $\Z^2$, $c_1$-adjacency is 4-adjacency and
          $c_2$-adjacency is 8-adjacency;
    \item in $\Z^3$, $c_1$-adjacency is 8-adjacency,
          $c_2$-adjacency is 18-adjacency, and
          $c_3$-adjacency is 26-adjacency.
\end{itemize}

An adjacency often used for Cartesian products of
digital images is the
{\em normal product adjacency}, denoted in the following by
$\kappa_*$ and defined~\cite{Berge}
as follows. Given digital images $(X,\kappa)$ and
$(Y,\lambda)$ and points $x, x' \in X$, $y,y' \in Y$,
we have $(x,y) \adj_{\kappa_*} (x',y')$ in $X \times Y$
if and only if one of the following holds.
\begin{itemize}
    \item $x \adj_{\kappa} x'$ and $y=y'$, or
    \item $x = x'$ and $y \adj_{\lambda} y'$, or
    \item $x \adj_{\kappa} x'$ and $y \adj_{\lambda} y'$.
\end{itemize}

Other adjacencies for digital images are discussed in
papers such as \cite{Herman,Bx17,Bx18}.

A {\em digital interval} is a digital image of the form
$([a,b]_Z, 2)$, where $a < b$ and 
$[a,b]_Z = \{z \in \Z \, | \, a \le z \le b\}$.

\subsection{$\ell_p$ metric}
Let $X \subset \R^n$ and let $x=(x_1,\ldots, x_n)$ and
$y=(y_1,\ldots,y_n)$ be points of $X$. Let
$1 \le p \le \infty$. The $\ell_p$ metric $d$ for $X$ is defined by
\[ d(x,y) = \left \{ \begin{array}{ll}
        \left (    \sum_{i=1}^n |x_i - y_i|^p \right )^{1/p} &
            \mbox{for } 1 \le p < \infty; \\
            \max \{|x_i - y_i|\}_{i=1}^n &
            \mbox{for } p = \infty. \end{array}
\right .
\]
For $p=1$, this gives us the {\em Manhattan metric}
$d(x,y) = \sum_{i=1}^n |x_i - y_i|$; for $p=2$, we have the
{\em Euclidean metric} $d(x,y) = (\sum_{i=1}^n |x_i - y_i|^2)^{1/2}$.

Notice that for any $\ell_p$ metric $d$, if $x,y \in \Z^n$ and
$d(x,y) < 1$, then $x=y$.

\subsection{Digital continuity and homotopy}
\begin{definition}
\label{digitalContinuity}
\cite{Rosenfeld87,Bx99}
A function $f: (X,\kappa) \to (Y, \lambda)$ between
digital images is $(\kappa,\lambda)$-{\em continuous}
(or just {\em continuous} when $\kappa$ and $\lambda$ are
understood) if for every $\kappa$-connected subset 
$X'$ of $X$, $f(X')$ is a $\lambda$-connected subset of $Y$.
\end{definition}

\begin{thm}
{\rm \cite{Bx99}}
A function $f: (X,\kappa) \to (Y, \lambda)$ between
digital images is $(\kappa,\lambda)$-{\em continuous} if
and only if $x \adj_{\kappa} x'$ in $X$ implies either
$f(x)=f(x')$ or $f(x) \adj_{\lambda} f(x')$ in $Y$.
\end{thm}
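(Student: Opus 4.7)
The plan is to prove both directions directly from the definition of continuity via connected subsets, using the fact that in a digital image, a subset is connected precisely when any two of its points can be joined by a sequence of pairwise-adjacent points.

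For the forward direction, I would assume $f$ is $(\kappa,\lambda)$-continuous and take any pair $x \adj_\kappa x'$. The two-point set $\{x,x'\}$ is a $\kappa$-connected subset of $X$, so by Definition~\ref{digitalContinuity} its image $f(\{x,x'\}) = \{f(x),f(x')\}$ is $\lambda$-connected in $Y$. If $f(x) = f(x')$, we are done; otherwise $\{f(x),f(x')\}$ is a two-element connected set, which forces $f(x) \adj_\lambda f(x')$.

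For the reverse direction, I would assume the local adjacency-preservation property and let $X' \subseteq X$ be $\kappa$-connected. To show $f(X')$ is $\lambda$-connected, take arbitrary $f(x), f(y) \in f(X')$ with $x,y \in X'$, and use the $\kappa$-connectedness of $X'$ to choose a $\kappa$-path $x = x_0, x_1, \ldots, x_n = y$ with each $x_i \in X'$ and $x_i \adj_\kappa x_{i+1}$. Applying the hypothesis to each adjacent pair, consecutive terms of $f(x_0), f(x_1), \ldots, f(x_n)$ are either equal or $\lambda$-adjacent. Deleting repeated consecutive terms yields a $\lambda$-path in $f(X')$ from $f(x)$ to $f(y)$, establishing $\lambda$-connectedness.

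The proof is essentially routine once the right connected set is chosen in each direction; the only subtle point is that one must explicitly allow the collapsing case $f(x) = f(x')$ to accommodate non-injective behavior along the path, which is why the statement is phrased with an ``or'' rather than demanding strict adjacency preservation. No deeper obstacle arises, since both directions rest only on the definition of $\kappa$-connectedness via adjacency-paths together with the elementary observation that a two-point subset is connected if and only if its points are adjacent.
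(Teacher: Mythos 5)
Your proof is correct. The paper itself offers no proof of this theorem---it is quoted as a known result from the cited reference---so there is nothing internal to compare against, but your argument is the standard one: the forward direction reduces to the observation that a two-point subset is $\lambda$-connected exactly when its points are equal or adjacent, and the reverse direction pushes an adjacency-path through $f$ and collapses repeated consecutive terms. Both steps are sound given the path characterization of $\kappa$-connectedness, and your remark about why the ``or $f(x)=f(x')$'' clause is needed is exactly the right subtlety to flag.
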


As in topology, the digital topology notion of homotopy
can be understood as one function deforming in a continuous
fashion into another. Precisely, we have the following.

\begin{definition}
\cite{Bx99}
Let $f,g: (X,\kappa) \to (Y,\lambda)$. We say
$f$ and $g$ are homotopic, denoted 
$f \simeq_{(\kappa,\lambda)} g$ or $f \simeq g$ when
$\kappa$ and $\lambda$ are understood, if there is
a function $F: X \times [0,m]_{\Z} \to Y$ for some 
$m \in \N$ such that
\begin{itemize}
    \item $F(x,0)=f(x)$ and $F(x,m)=g(x)$ for all $x \in X$.
    \item The induced function $F_t: X \to Y$ defined by
          $F_t(x)=F(x,t)$ is $(\kappa,\lambda)$-continuous
          for all $t \in [0,m]_{\Z}$.
    \item The induced function $F_x: [0,m]_{\Z} \to Y$
          defined by $F_x(t)=F(x,t)$ is 
          $(2,\lambda)$-continuous for all $x \in X$.
\end{itemize}
\end{definition}

\subsection{Cauchy sequences and complete metric spaces}
The papers~\cite{EgeKaraca,Han16,Hossain-etal,Jain,JR17,JR18,Mishra-etal,Ege-etal} apply to 
digital images the notions of Cauchy 
sequence and complete metric space.
Since for common metrics such as an $\ell_p$ metric, a digital metric space is discrete, the digital versions of 
these notions are quite limited.

Recall that a sequence of points $\{x_n\}$ in a metric space 
$(X,d)$ is a {\em Cauchy sequence}
if for all $\varepsilon > 0$ there exists $n_0 \in \N$
such that $m,n > n_0$ implies
$d(x_m,x_n) < \varepsilon$. If every Cauchy sequence in $X$ has a limit, then $(X,d)$ is a 
{\em complete metric space}.

It has been shown that under a mild additional assumption, a digital
Cauchy sequence is eventually constant. The following is an easy 
generalization of Proposition~3.6 of~\cite{Han16}, where only
the Euclidean metric was considered. The proof given in~\cite{Han16}
is easily modified to give the following.

\begin{thm}
\label{Han-Cauchy}
Let $a > 0$. If $d$ is a metric on a digital image $(X,\kappa)$ such that
for all distinct $x,y \in X$ we have $d(x,y) > a$, then
for any Cauchy sequence $\{x_i\}_{i=1}^{\infty} \subset X$ 
there exists $n_0 \in \N$ such that $m,n > n_0$ implies $x_m = x_n$.
\end{thm}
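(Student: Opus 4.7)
The plan is to apply the definition of a Cauchy sequence with a specific choice of $\varepsilon$ tailored to the hypothesis on $d$, and then use the contrapositive of the separation condition on distinct points to conclude.

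First, I would set $\varepsilon = a$, which is positive by assumption. By the definition of Cauchy sequence, there exists $n_0 \in \N$ such that for all $m, n > n_0$, we have $d(x_m, x_n) < a$. Next, I would fix any such $m, n$ and argue by contrapositive: the hypothesis states that distinct points $x, y \in X$ satisfy $d(x,y) > a$, so any pair of points with $d(x,y) \le a$ (in particular, $d(x,y) < a$) must be equal. Applying this to $x_m$ and $x_n$ yields $x_m = x_n$, which is the desired conclusion.

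The argument is essentially one line once unpacked, so there is no genuine obstacle; the only thing to be careful about is choosing $\varepsilon$ small enough to activate the separation hypothesis, and choosing it to be exactly $a$ (rather than something smaller) works cleanly because the hypothesis gives strict inequality $d(x,y) > a$. This is why the statement generalizes the Euclidean case of~\cite{Han16}: the proof there used the specific fact that distinct integer points are at Euclidean distance at least $1$, but any uniform positive lower bound $a$ on distances between distinct points suffices, with no further modification.
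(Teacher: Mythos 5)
Your proof is correct and is exactly the standard argument the paper has in mind (it cites the proof of Proposition~3.6 of~\cite{Han16} as being ``easily modified'' to give this statement, and the modification is precisely your choice of $\varepsilon = a$ together with the contrapositive of the separation hypothesis). Nothing further is needed.
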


An immediate consequence of Theorem~\ref{Han-Cauchy} is the 
following.

\begin{cor}
{\rm \cite{Han16}}
\label{Han-Cauchy-cor}
Let $(X,d,\kappa)$ be a digital metric space. If $d$ is a metric on $(X,\kappa)$ such 
that for all distinct $x,y \in X$ we have $d(x,y) > a$ for some constant $a > 0$, 
then any Cauchy sequence in $X$ is eventually constant, and $(X,d)$ is a complete metric space.
\end{cor}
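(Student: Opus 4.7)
The plan is to derive both conclusions directly from Theorem~\ref{Han-Cauchy}. The first conclusion, that any Cauchy sequence is eventually constant, is essentially a restatement of the theorem: if $\{x_i\}$ is Cauchy, the theorem gives $n_0$ such that $x_m = x_n$ for all $m,n > n_0$, so the common value $x_* := x_{n_0+1}$ is the eventual value of the sequence.

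For the completeness conclusion, I would observe that once a sequence is eventually equal to a point $x_* \in X$, it trivially converges to $x_*$ in the metric $d$, since for every $\varepsilon > 0$ and all $n > n_0$ we have $d(x_n, x_*) = 0 < \varepsilon$. Because every Cauchy sequence in $X$ thus has a limit lying in $X$, the space $(X,d)$ is complete by definition.

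There is no real obstacle here; the corollary is a straightforward packaging of Theorem~\ref{Han-Cauchy} together with the fact that an eventually constant sequence in a metric space converges to its eventual value. The only thing worth emphasizing for the reader is that the limit automatically belongs to $X$ (rather than merely to some ambient space), since it is literally one of the terms of the sequence; this is what makes completeness, as opposed to just the ``eventually constant'' property, meaningful in this discrete setting.
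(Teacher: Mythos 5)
Your proof is correct and matches the paper's intent exactly: the paper presents this corollary as ``an immediate consequence'' of Theorem~\ref{Han-Cauchy}, with no further argument, and your two steps (eventual constancy from the theorem, then convergence of an eventually constant sequence to its eventual value, which lies in $X$) are precisely the routine packaging the authors had in mind.
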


\begin{remark}
{\rm
It is easily seen that the hypotheses of 
Theorem~\ref{Han-Cauchy} and 
Corollary~\ref{Han-Cauchy-cor} 
are satisfied for any finite digital metric space, or for a 
digital metric space $(X,d,\kappa)$ for which the metric $d$ 
is any $\ell_p$ metric. Thus, a Cauchy sequence that is
not eventually constant can only occur in an infinite
digital metric space with an unusual metric. Such an
example is given below.
}
\end{remark}

\begin{exl}
Let $d$ be the metric on $(\N,c_1)$ defined by
$d(i,j) = |1/i - 1/j|$. Then $\{i\}_{i=1}^{\infty}$ is
a Cauchy sequence for this metric that does not have a
limit.
\end{exl}

\subsection{Digital fixed points and approximate fixed points}
The study of the fixed points of continuous self-maps
is prominent in many areas of mathematics. We say
a topological space or a digital image $X$ has the 
{\em fixed point property} if every continuous (respectively,
digitally continuous)
$f: X \to X$ has a fixed point, i.e., a point $p \in X$ such
that $f(p)=p$. 

A version of Theorem~\ref{singleton} below was proved by Rosenfeld in \cite{Rosenfeld87} for the case when $X$ is a \emph{digital picture}, that is, a digital image of the form
$\Pi_{i=1}^n [a_i,b_i]_{\Z} \subset \Z^n$ with the 
$c_n$-adjacency. For general digital images, 
Theorem \ref{singleton} was proved in \cite{Bx-etal}.
 
\begin{thm}
\label{singleton}
A digital image $(X,\kappa)$ has the fixed point property
if and only if $X$ is a singleton.
\end{thm}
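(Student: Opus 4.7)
The ``if'' direction is immediate: the only self-map of a singleton $\{x_0\}$ is the identity, which fixes $x_0$. So the content of the theorem lies entirely in the converse, which I would prove by contrapositive. Assuming $|X| \ge 2$, the goal is to exhibit a $(\kappa,\kappa)$-continuous $f\colon X\to X$ with no fixed point.

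The plan is to take $f$ to be a ``collapse to one point, with a single exception.'' I would choose $x_0 \in X$ and $x_1 \in X\setminus\{x_0\}$ subject to the rule: if $x_0$ is incident to any $\kappa$-edge within $X$, insist that $x_1$ be one of its $\kappa$-neighbors in $X$. This selection is always achievable when $|X|\ge 2$: if the $\kappa$-adjacency relation on $X$ is empty, pick any two distinct points as $x_0,x_1$; otherwise, take $x_0$ to be an endpoint of some edge of $(X,\kappa)$ and let $x_1$ be the other endpoint. Then I would set
\[
f(x) = \begin{cases} x_1 & \text{if } x = x_0, \\ x_0 & \text{if } x \neq x_0. \end{cases}
\]

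Verification of the two required properties is then routine. Fixed-point-freeness is direct from the definition: $f(x_0) = x_1 \neq x_0$, and $f(x) = x_0 \neq x$ for every $x \neq x_0$. For continuity, I would take an arbitrary adjacency $x \adj_\kappa y$ in $X$ and argue by cases: if neither endpoint equals $x_0$, then $f(x)=f(y)=x_0$ and the continuity condition is satisfied trivially; if one endpoint is $x_0$, say $x = x_0$, then $y$ is a $\kappa$-neighbor of $x_0$ in $X$, so by the selection rule $x_1 \adj_\kappa x_0$, giving $f(x) = x_1 \adj_\kappa x_0 = f(y)$.

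I do not anticipate a genuine obstacle. The one subtlety worth being careful about is the selection rule: if one blindly picks $x_0$ and $x_1$, the image $f(x_0)=x_1$ might fail to be adjacent to $x_0$, breaking continuity at every edge through $x_0$. Tying the choice of $x_1$ to an actual neighbor of $x_0$ (whenever one exists) eliminates this issue, after which the proof is essentially a single line.
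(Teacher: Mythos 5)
Your construction is precisely the one the paper relies on (its Proposition~\ref{noFixed}, taken from the cited source): the near-constant map sending every point to $x_0$ except $x_0$ itself, which goes to an adjacent $x_1$, verified edge-by-edge to be continuous and fixed-point free. The proposal is correct and takes essentially the same approach; your explicit handling of the case where $x_0$ has no $\kappa$-neighbors is a minor completeness refinement over the paper's version, which assumes $X$ connected.
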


This theorem led to the study in~\cite{Bx-etal}
of the {\em approximate fixed point property}, an idea 
suggested by results of~\cite{Rosenfeld87}. An
{\em approximate fixed point}~\cite{Bx-etal}, called an
{\em almost fixed point} in~\cite{Rosenfeld87}, of a
continuous function $f: (X,\kappa) \to (X,\kappa)$ is a point $p \in X$
such that $f(p)=p$ or $f(p) \adj_{\kappa} p$.
A digital image $(X,\kappa)$ has the {\em approximate
fixed point property (AFPP)}~\cite{Bx-etal} if for
every continuous $f: X \to X$ there is an approximate
fixed point of $f$.

We have rephrased the following to conform 
with terminology used in this paper.

\begin{thm}
{\rm (Theorem~4.1 of~{\rm 
\cite{Rosenfeld87}})}
\label{4.1Rose}
Every digital picture 
$(\Pi_{i=1}^n [a_i,b_i]_{\Z}, c_n)$ has the AFPP.
\end{thm}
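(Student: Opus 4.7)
My plan is to reduce to Brouwer's fixed point theorem applied to a continuous extension of $f$ on the real box $\tilde X = \prod_{i=1}^n [a_i,b_i] \subset \R^n$. I cover $\tilde X$ by the unit cubes $C_c = \prod_{i=1}^n [c_i, c_i+1]$ indexed by $c \in \prod_i [a_i, b_i-1]_{\Z}$, and on each such $C_c$ I define $\tilde f$ by multilinear interpolation of the $2^n$ values $f(v)$ at the lattice corners $v$ of $C_c$. Because multilinear interpolation restricted to a face depends only on the values at the corners of that face, the local definitions agree on overlaps and $\tilde f$ is continuous on $\tilde X$. On each cube, $\tilde f$ is a convex combination of points of $X \subset \tilde X$, so $\tilde f(\tilde X) \subseteq \tilde X$, and Brouwer's theorem yields $\tilde p \in \tilde X$ with $\tilde f(\tilde p) = \tilde p$.

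Pick $c$ with $\tilde p \in C_c$. The key observation is that any two corners $v, v'$ of $C_c$ satisfy $v = v'$ or $v \adj_{c_n} v'$; by $(c_n,c_n)$-continuity of $f$, the set $S = \{ f(v) : v \text{ a corner of } C_c \}$ has $\ell_\infty$-diameter at most $1$, and hence lies inside some unit cube $\prod_i [m_i, m_i+1]$ with $m \in \Z^n$. Since $\tilde p = \tilde f(\tilde p)$ is a convex combination of the elements of $S$, we also get $\tilde p_i \in [m_i, m_i+1]$ for every $i$. Combined with $\tilde p_i \in [c_i, c_i+1]$, this forces $|c_i - m_i| \le 1$, so at least one of $c_i$ and $c_i + 1$ lies in $[m_i, m_i+1]$.

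I now choose the corner $p$ of $C_c$ by taking $p_i$ to be such an endpoint of $[c_i, c_i+1]$ lying in $[m_i, m_i+1]$ (rather than the endpoint nearest $\tilde p$). Then for every coordinate $i$ we have both $p_i \in [m_i, m_i+1]$ and $f(p)_i \in [m_i, m_i+1]$, so $|p_i - f(p)_i| \le 1$. Therefore $\|p - f(p)\|_\infty \le 1$, which in $\Z^n$ means $f(p) = p$ or $f(p) \adj_{c_n} p$, so $p$ is an approximate fixed point.

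The main obstacle is this last rounding step. Rounding $\tilde p$ coordinatewise to the nearest lattice point only gives $\|p - f(p)\|_\infty \le 2$, which is insufficient. The trick is to exploit the freedom in choosing any corner of $C_c$: orienting $p$ toward the cube $\prod [m_i, m_i+1]$ containing $S$, rather than toward $\tilde p$, sharpens the bound from $2$ to $1$ and delivers the approximate fixed point.
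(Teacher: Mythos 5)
Your argument is essentially correct, but note first that the paper contains no proof of this statement: it is quoted as Theorem~4.1 of Rosenfeld's paper \cite{Rosenfeld87}, so the only comparison available is with Rosenfeld's original, purely discrete argument, not with anything in this text. Your route through a genuinely continuous extension and Brouwer's theorem is different and works: the multilinear interpolants agree on shared faces, $\tilde f$ maps the convex compact set $\tilde X$ into itself, and the decisive observation is that the corners of the cube $C_c$ containing the Brouwer fixed point $\tilde p$ are pairwise equal or $c_n$-adjacent, so by digital continuity their images all lie in a single unit box $\prod_i [m_i,m_i+1]$, which therefore also contains $\tilde p = \tilde f(\tilde p)$; your oriented choice of corner then gives $|p_i - f(p)_i|\le 1$ in every coordinate, i.e.\ $f(p)=p$ or $f(p) \adj_{c_n} p$. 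Two small remarks. First, your closing claim that nearest-corner rounding only yields a bound of $2$ is not right: once you know $\tilde p$ and every $f(v)$ lie in $\prod_i[m_i,m_i+1]$, the nearest corner $q$ of $C_c$ satisfies $|q_i - f(q)_i| \le |q_i - \tilde p_i| + |\tilde p_i - f(q)_i| \le \tfrac12 + 1$, and since the left-hand side is an integer it is at most $1$; so the orientation trick, while valid, is not where the content lies --- the containment of the corner images in a common unit box is. Second, you should dispose of the degenerate case $a_i = b_i$ for some $i$ (no unit cubes in that coordinate), e.g.\ by projecting those coordinates away. The benefit of your approach over Rosenfeld's is that it is uniform in the dimension $n$ and makes the analogy with the classical Brouwer theorem explicit.
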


In Remark 6.2 (2) of~\cite{Han17}, the author 
incorrectly attributes to~\cite{Rosenfeld87} the claim that 
``Every digital image $(Y, 8)$ has the AFPP." The
attribution is incorrect, since, as we have shown above,
the citation should be about digital pictures, not the
more general digital images. Further, the claim is false, as the following example shows.

\begin{exl}
Let $n \ge 4$ and let
$Y = (\{y_i\}_{i=0}^{n-1},8) \subset \Z^2$ be a digital
simple closed curve with the points $y_i$ indexed
circularly. Then $(Y,8)$ does not have the AFPP.
\end{exl}

\begin{proof}
The function $f: Y \to Y$ defined by
$f(y_i) = y_{(i+2) \mod n}$ is easily seen to be
$(8,8)$-continuous and free of approximate fixed points.
\end{proof}

\subsection{Contraction and expansion functions}
\label{contractionFunctions}
We introduce several fixed point theorems for digital topology that
are modeled on analogs for the topology and analysis of $\R^n$.

In the following definitions, we assume $(X,d,\kappa)$ is a digital 
metric space and $f: X \to X$ is a function.

\begin{definition}
\label{contractionDef}
{\rm \cite{EgeKaraca}}
If for some $\alpha \in (0,1)$ and all  
          $x,y \in X$, $d(f(x),f(y)) < \alpha d(x,y)$, then
          $f$ is a {\em digital contraction map}. We say
$\alpha$ is the {\em multiplier}.
\end{definition}         
         
Note such a function should not be confused with a
{\em digital contraction}~\cite{Bx94}, a homotopy between 
an identity map and a constant function.
    
\begin{definition}
\label{KannanDef}
If         \[ d(f(x),f(y)) \le \alpha [d(x, f(x)) + d(y, f(y))] \]
          for all $x,y \in X$, where $0 < \alpha < 1/2$, we say
          $f$ is a {\em Kannan contraction map}.
\end{definition}
  
\begin{definition}
\label{ChatterjeaDef}
    If
          \[ d(f(x),f(y)) \le \alpha [d(x, f(y)) + d(y, f(x))] \]
          for all $x, y \in X$, where $0 < \alpha < 1/2$, we say
          $f$ is a {\em Chatterjea contraction map}.
    \end{definition}
    
\begin{definition}
\label{ReichDef}
If
\[ d(f(x),f(y)) \le ad(x, f(x)) + bd(y, f(y)) + cd(x, y)
\]
          for all $x,y \in X$ and all nonnegative $a,b,c$ such that
          $a+b+c < 1$, then $f$ is a {\em Reich contraction map}.
\end{definition} 

\begin{prop}
\label{ReichIsContraction}
A Reich contraction map is a digital contraction map and is a
Kannan contraction map.
\end{prop}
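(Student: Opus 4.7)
My plan is to derive both conclusions directly from the Reich hypothesis by specializing the constants $(a,b,c)$, exploiting the fact that Definition~\ref{ReichDef} requires the stated inequality to hold for \emph{every} nonnegative triple with $a+b+c < 1$, so we are free to substitute any admissible triple we like.

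For the Kannan conclusion, I would fix any $\alpha \in (0,1/2)$ and choose $(a,b,c) = (\alpha,\alpha,0)$. Since $a+b+c = 2\alpha < 1$, the Reich inequality specializes to
\[ d(f(x),f(y)) \le \alpha\,[d(x,f(x)) + d(y,f(y))] \]
for all $x,y \in X$, which is precisely Definition~\ref{KannanDef} with the same $\alpha$.

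For the digital contraction conclusion, I would choose $(a,b,c) = (0,0,1/2)$, so that the Reich hypothesis collapses to $d(f(x),f(y)) \le \tfrac{1}{2}\,d(x,y)$ for all $x,y \in X$. Picking $\alpha \in (1/2, 1)$ (for instance $\alpha = 3/4$) as the contraction multiplier, this weak inequality upgrades to the strict inequality $d(f(x),f(y)) < \alpha\,d(x,y)$ whenever $d(x,y) > 0$, which is the content of Definition~\ref{contractionDef}.

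I do not expect any real obstacle: no appeal to the triangle inequality is needed, and the whole proposition is essentially a matter of substitution. The only mild bookkeeping is the gap between the weak inequality produced by the Reich condition and the strict inequality appearing in Definition~\ref{contractionDef}, which is absorbed by leaving a small slack between the coefficient $1/2$ used in the Reich substitution and the ultimate choice of multiplier $\alpha$.
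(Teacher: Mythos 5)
Your proof is correct and takes essentially the same route as the paper's: both arguments simply specialize the constants in Definition~\ref{ReichDef}, the paper choosing $a=b=0$ for the digital contraction conclusion and $a=b\in(0,1/2)$, $c=0$ for the Kannan conclusion. Your additional care in leaving slack between the substituted coefficient $\tfrac{1}{2}$ and the final multiplier $\alpha$, so that the weak Reich inequality yields the strict inequality required by Definition~\ref{contractionDef}, is a detail the paper's proof glosses over.
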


\begin{proof}
\label{ReichParticularizes}
Let $f$ be a Reich contraction map. 
That $f$ is a digital contraction map follows from the observation 
of~\cite{Reich} that in Definition~\ref{ReichDef}, we can take 
$a=b=0$ and obtain the conclusion from 
Definition~\ref{contractionDef}. That $f$ is a Kannan contraction 
map follows from the observation that in Definition~\ref{ReichDef}, 
we can take $a=b \in (0,1/2)$ and $c=0$
to obtain the conclusion from Definition~\ref{KannanDef}.
\end{proof}

\begin{definition}
\label{Zamf}
\cite{Mishra-etal}
Let $(X,d,\kappa)$ be a digital metric space and let
$f: X \to X$ be a function. If there exists
$\alpha \in (0,1)$ such that for all $x,y \in X$ we have
\[ d(f(x),f(y)) \le \alpha \, \max\{d(x,y), 
   \frac{d(x,f(x)) + d(y,f(y))}{2},
   \frac{d(x,f(y)) + d(y,f(x))}{2}\}
\]
then $f$ is called a {\em Zamfirescu digital contraction}.
\end{definition}

\begin{definition}
\label{Rhoades}
\cite{Mishra-etal}
Let $(X,d,\kappa)$ be a digital metric space and let
$f: X \to X$ be a function. If there exists
$\alpha \in (0,1)$ such that for all $x,y \in X$ we have
\[ d(f(x),f(y)) \le \alpha \, \max\{d(x,y), 
   \frac{d(x,f(x)) + d(y,f(y))}{2},
   d(x,f(x)), d(y,f(y))\}
\]
then $f$ is called a {\em Rhoades digital contraction}.
\end{definition}

\begin{prop}
\label{ZamfAndRhoadesContractions}
Let $(X,d,\kappa)$ be a digital metric space and let
$f: X \to X$ be a function. If $f$ is a Zamfirescu digital 
contraction or a Rhoades digital contraction, then $f$
is a digital contraction map.
\end{prop}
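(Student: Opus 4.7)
The plan is to treat the Rhoades case and the Zamfirescu case separately. In each, fix distinct $x, y \in X$ and set $A = d(x,y)$, $B = d(x,f(x))$, $C = d(y,f(y))$, $D = d(x,f(y))$, $E = d(y,f(x))$, $F = d(f(x),f(y))$. The goal is to produce $\alpha' \in (0,1)$ such that $F \le \alpha' A$ (and then shrink $\alpha'$ slightly, if needed, to get the strict inequality in Definition~\ref{contractionDef}).

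For the Rhoades case, the first observation is that $\tfrac{B+C}{2} \le \max(B,C)$, so the maximum inside Definition~\ref{Rhoades} collapses to $\max(A, B, C)$. A direct case analysis on which of the three terms achieves this max is the natural approach. When $A$ is the max, the hypothesis $F \le \alpha A$ is itself the digital contraction condition. When $B$ (or symmetrically $C$) is the max, one should combine the triangle inequalities $B \le A + E$, $E \le C + F$, $C \le E + F$, $B \le D + F$, and $D \le A + C$ to eliminate $B, C, D, E$ in favor of $A$ and $F$, and then solve the resulting inequality for $F$.

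For the Zamfirescu case, one performs a parallel analysis on the three terms $A$, $P = \tfrac{B+C}{2}$, and $Q = \tfrac{D+E}{2}$. The key inequalities are $P \le Q + F$ (from $B \le D + F$ and $C \le E + F$) and $Q \le A + P$ (from $D \le A + C$ and $E \le A + B$), together with their symmetric partners. These constrain the three candidates to lie close to one another, so that when $P$ or $Q$ attains the max, solving the resulting linear inequality in $F$ should force $F$ to be bounded by a fixed multiple of $A$ that is strictly less than $1$.

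The step I expect to be the genuine obstacle is that in an arbitrary metric space, Zamfirescu and Rhoades operators are only quasi-contractions, not genuine Banach contractions. So the algebraic reduction above does not work for general metric spaces, and some special feature of digital metric spaces must be exploited. A likely candidate is the positive lower bound on distances between distinct points that holds under the hypotheses of Corollary~\ref{Han-Cauchy-cor} (and automatically for any $\ell_p$ metric): this quantizes the possible values of $F$ and may close the bound in the remaining cases. I would need to work through the case analysis carefully to avoid circular applications of the triangle inequality and, if the purely algebraic route stalls, to fall back on this discreteness to force the desired multiplier strictly below $1$.
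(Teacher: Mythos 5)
Your instinct that the algebraic reduction is the genuine obstacle is correct, but the obstacle cannot be overcome: reading Definitions~\ref{Zamf} and~\ref{Rhoades} literally, the proposition is false, so neither your triangle-inequality case analysis nor the discreteness fallback can close the argument. Take $X=\{0,1,10,12\}\subset\Z$ with the Euclidean metric and define $f(0)=10$, $f(1)=12$, $f(10)=f(12)=10$. For the pair $(0,1)$ one has $d(f(0),f(1))=2$ while $d(0,1)=1$, $\tfrac{d(0,f(0))+d(1,f(1))}{2}=\tfrac{10+11}{2}=10.5$, and $\tfrac{d(0,f(1))+d(1,f(0))}{2}=\tfrac{12+9}{2}=10.5$, so the Zamfirescu (and likewise the Rhoades) inequality holds for this pair with $\alpha=1/2$; the remaining pairs are immediate, since the other pairs with $d(f(x),f(y))=2$ have $d(x,y)\ge 9$ and all others have $d(f(x),f(y))=0$. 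Yet $d(f(0),f(1))=2>d(0,1)$, so $f$ is not a digital contraction map for any multiplier. This matches the classical fact you recall: Zamfirescu and Rhoades operators are only quasi-contractions, and the discreteness of $\Z^n$ does not change that.

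For comparison, the paper's own proof is a one-line claim that the definitions directly give $d(f(x),f(y))\le\alpha\,d(x,y)$; it silently replaces the maximum by its first argument, which is a non sequitur precisely when the maximum is attained by one of the other terms, as in the example above. So your proposal is more careful than the paper's argument, but the honest outcome of your case analysis is that the implication requires an additional hypothesis --- namely that the maximum is always attained by $d(x,y)$, i.e., that the Banach-type bound is assumed outright, which is in effect what the remark following Theorem~\ref{cu} adds when it invokes this proposition.
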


\begin{proof}
From Definitions~\ref{Zamf} and \ref{Rhoades},
$d(f(x), f(y)) \le \alpha d(x,y)$ for all $x,y \in X$. The
assertion follows from Definition~\ref{contractionDef}.
\end{proof}

The following is a minor generalization of a definition
in~\cite{Hossain-etal}. Therefore, results we derive in this 
paper for digitally $(\alpha, \kappa)$-uniformly locally
contractive functions apply to the version
in~\cite{Hossain-etal}.

\begin{definition}
\label{locallyContractive}
Suppose $0 \le \alpha < 1$. Let $(X,d,\kappa)$ be a digital 
metric space. Let $f: X \to X$ be a function such that
$d(x,y) \le 1$ implies $d(f(x),f(y)) \le \alpha d(x,y)$. 
Then $f$ is called 
{\em digitally $(\alpha, \kappa)$-uniformly locally
contractive}.
\end{definition}

\begin{prop}
A digital contraction map with multiplier $\alpha$ is a 
digitally $(\alpha, \kappa)$-uniformly locally 
contractive map.
\end{prop}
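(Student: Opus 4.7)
The plan is simply to unwind the two definitions and observe that the hypothesis of being a digital contraction map is strictly stronger than the hypothesis required to be digitally $(\alpha,\kappa)$-uniformly locally contractive, since the latter only imposes a condition on pairs $x,y$ with $d(x,y)\le 1$, and does so with a non-strict inequality.

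More concretely, I would fix a digital contraction map $f \colon X \to X$ with multiplier $\alpha \in (0,1)$, so that by Definition~\ref{contractionDef} we have $d(f(x),f(y)) < \alpha\, d(x,y)$ for every pair $x,y \in X$. I would then let $x,y \in X$ be any pair satisfying the hypothesis $d(x,y) \le 1$ of Definition~\ref{locallyContractive}, and conclude $d(f(x),f(y)) < \alpha\, d(x,y) \le \alpha\, d(x,y)$, which verifies the defining inequality of a digitally $(\alpha,\kappa)$-uniformly locally contractive function.

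There is no real obstacle here: the only minor point worth flagging in the write-up is the passage from strict to non-strict inequality and the fact that the contraction hypothesis applies to all pairs while the local hypothesis is vacuous outside the set $\{(x,y) : d(x,y) \le 1\}$. No use of the adjacency $\kappa$ or of any structural property of $X$ is needed, so the proof is essentially a one-line observation about the logical form of the two definitions.
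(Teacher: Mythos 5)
Your proposal is correct and matches the paper's approach: the paper's proof is simply the one-line observation that the claim is immediate from Definitions~\ref{contractionDef} and~\ref{locallyContractive}, and your write-up just spells out that unwinding (strict inequality for all pairs implies the non-strict inequality on the subset of pairs with $d(x,y)\le 1$). The only cosmetic quibble is the redundant chain ``$< \alpha\, d(x,y) \le \alpha\, d(x,y)$,'' which should simply read ``$< \alpha\, d(x,y)$, hence $\le \alpha\, d(x,y)$.''
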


\begin{proof}
This is obvious from Definition~\ref{contractionDef} and
Definition~\ref{locallyContractive}.
\end{proof}

Below, we define a set of functions $\Psi$ that will
be used in the following.

\begin{definition}
{\rm \cite{JR18}}
\label{psiFamily}
Let $\Psi$ be a set of functions 
$\psi: [0,\infty) \to [0,\infty)$ such that for each
$\psi \in \Psi$ we have
\begin{itemize}
    \item $\psi$ is nondecreasing, and
    \item there exists $k_0 \in \N$, $a \in (0,1)$, and a
          convergent series $\sum_{k=1}^{\infty} v_k$ of
          non-negative terms such that $k \ge k_0$ implies
          $\psi^{k+1}(t) \le a \psi^k(t) + v_k$ for all
          $t \in [0,\infty)$, where $\psi^k$ represents the
          $k$-fold composition of $\psi$.
\end{itemize}
\end{definition}

The following will be used later in the paper.

\begin{exl}
\label{0inPsi}
The constant function with value 0 is a member of~$\Psi$.
\end{exl}

\begin{definition}
{\rm \cite{SametEtAl}}
\label{alphaAdmissible}
Let $T: X \to X$ and $\alpha: X \times X \to [0, \infty)$.
We say $T$ is {\em $\alpha$-admissible} if
$\alpha(x,y) \ge 1$ implies $\alpha(T(x),T(y)) \ge 1$.
\end{definition}

\begin{definition}
{\rm \cite{SametEtAl}}
\label{alpha-psi-contractive}
Let $(X,d)$ be a metric space, $T: X \to X$, 
$\alpha: X \to X$, and $\psi \in \Psi$. We say $T$ is an
{\em $\alpha-\psi$-contractive mapping} if
$\alpha(x,y) d(T(x),T(y)) \le \psi(d(x,y))$
for all $x,y \in X$.
\end{definition}

\begin{remark}
{\rm \cite{JR18}}
\label{contractionIsAlphaPsiContractive}
A digital contraction map $f: (X,d,\kappa) \to (X,d,\kappa)$
is an $\alpha-\psi$-contractive mapping for $\alpha(x,y)=1$
and $\psi(t) = \lambda t$ for $\lambda \in (0,1)$.
\end{remark}

\begin{definition}
\label{expansive3}
{\rm \cite{JR18}}
Let $(X,d,\kappa)$ be a digital metric space, $T: X \to X$,
$\beta: X \times X \to [0,\infty)$, and 
$\psi, \phi \in \Psi$ such that
\[ \psi(d(T(x), T(y))) \ge \beta(x,y) \psi(d(x,y)) +
                           \phi(d(x,y))
\]
for all $x,y \in X$. Then $T$ is a
{\em $\beta - \psi - \phi$-expansive mapping}.
\end{definition}

Depending on the choice of functions $\beta, \phi$ in 
Definition~\ref{expansive3}, the definition may not be 
very discriminating, as we see in the following.

\begin{prop}
\label{expansiveNondistinguishing}
Every function $T: X \to X$ is a
$\beta - \psi - \phi$-expansive mapping
if we take $\beta$ and $\phi$ to be
constant functions with value 0.
\end{prop}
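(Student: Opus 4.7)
The plan is essentially to unwind Definition~\ref{expansive3} with the stated choices and observe that the resulting inequality is vacuous. Concretely, I would first fix any $\psi \in \Psi$; that the set of candidates is nonempty is guaranteed by Example~\ref{0inPsi}, which exhibits the constant zero function as a member of $\Psi$, though the argument does not actually depend on which $\psi$ is chosen.

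Next, I would substitute $\beta(x,y) = 0$ and $\phi(d(x,y)) = 0$ into the defining inequality. The right-hand side collapses to $0 \cdot \psi(d(x,y)) + 0 = 0$, so the condition to verify reduces to
\[
\psi(d(T(x),T(y))) \ge 0 \quad \text{for all } x,y \in X.
\]
This holds automatically because $\psi$ is required by Definition~\ref{psiFamily} to take values in $[0,\infty)$.

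There is essentially no obstacle here; the point of the proposition is precisely that Definition~\ref{expansive3} degenerates when $\beta$ and $\phi$ are allowed to vanish identically, so it imposes no restriction whatsoever on $T$. I would present the argument as a two-line verification and let the reader draw the intended conclusion that Definition~\ref{expansive3} is of limited discriminating power without further hypotheses on $\beta$ and $\phi$.
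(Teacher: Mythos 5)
Your argument is correct and is essentially the paper's own proof, which simply cites Example~\ref{0inPsi} and Definition~\ref{expansive3}; you have just written out the substitution explicitly, reducing the defining inequality to $\psi(d(T(x),T(y))) \ge 0$, which holds since $\psi$ maps into $[0,\infty)$. One small point of emphasis: the essential role of Example~\ref{0inPsi} here is to certify that the constant zero function is a legitimate choice of $\phi \in \Psi$ (as Definition~\ref{expansive3} requires $\phi \in \Psi$), rather than merely to witness that candidates for $\psi$ exist, but your citation of it covers this.
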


\begin{proof}
The assertion follows from Example~\ref{0inPsi} and
Definition~\ref{expansive3}.
\end{proof}

\begin{definition}
{\rm \cite{Dol-Nal}}
\label{weaklyUnif}
Let $(X,d,\kappa)$ be a digital metric space.
Let $T: X \to X$. Then $T$ is a 
{\em weakly uniformly strict digital contraction} if given
$\varepsilon > 0$ there exists $\delta > 0$ such that
$\varepsilon < d(x,y) < \varepsilon + \delta$ implies
$d(T(x),T(y)) < \varepsilon$ for all $x,y \in X$.
\end{definition}

\begin{definition}
{\rm \cite{JR18}}
\label{expansive}
Let $(X,d,\kappa)$ be a complete digital metric space.
Let $T: X \to X$. If $T$ satisfies the condition 
$d(T(x),T(y)) \ge k d(x,y)$ for all $x,y \in X$ and
some $k >1$, then $T$ is a {\em digital expansive mapping}.
\end{definition}

\begin{exl}
\label{expansiveExl}
The function $T: \N \to \N$ defined by $T(n)=2n$ is a
digital expansive mapping, using the usual Euclidean metric.
This map is not $c_1$-continuous~\cite{Rosenfeld87}.
\end{exl}

The literature contains the following theorems concerning 
fixed points for such functions.

The following is a digital 
version of the Banach contraction principle~\cite{Banach}.

\begin{thm}
\label{Banach}
{\rm \cite{EgeKaraca}}
Let $(X,d,\kappa)$ be a complete digital metric space,
where $d$ is the Euclidean metric in $\Z^n$. Let $f: X \to X$ be a digital
contraction map. Then $f$ has a unique fixed point.
\end{thm}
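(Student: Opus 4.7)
The plan is to carry out the Banach iteration argument, but to use the discreteness of $\Z^n$ under the Euclidean metric to short-circuit it: the iterates will become stationary after finitely many steps rather than merely converging.

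First I would pick any seed point $x_0 \in X$ and define $x_{n+1} = f(x_n)$ for $n \ge 0$. Applying Definition~\ref{contractionDef} inductively gives $d(x_n, x_{n+1}) \le \alpha^n d(x_0, x_1)$, so $d(x_n, x_{n+1}) \to 0$ as $n \to \infty$. Next I would invoke the observation recorded just after the definition of the $\ell_p$ metric: for $x, y \in \Z^n$, if $d(x,y) < 1$ under the Euclidean metric, then $x = y$. Since $\alpha^n d(x_0, x_1) < 1$ for all sufficiently large $n$, there is some $N$ with $d(x_N, x_{N+1}) < 1$, and therefore $x_N = x_{N+1} = f(x_N)$. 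Thus $p := x_N$ is a fixed point of $f$.

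For uniqueness, suppose $p$ and $q$ are both fixed. If $p \ne q$ then $d(p,q) > 0$, but the contraction inequality applied to the fixed points yields
\[
 d(p,q) = d(f(p), f(q)) < \alpha \, d(p,q),
\]
contradicting $\alpha \in (0,1)$. Hence $p = q$.

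I do not expect a real obstacle; the argument is essentially a trivialization of the classical Banach proof, because Corollary~\ref{Han-Cauchy-cor} (which applies since the Euclidean metric is an $\ell_p$ metric) guarantees that the Cauchy sequence of iterates is eventually constant, and the gap $d(x,y) \ge 1$ between distinct points of $\Z^n$ lets us identify the exact step at which that occurs. The only minor care point is to read Definition~\ref{contractionDef} with the harmless convention that the strict inequality $d(f(x),f(y)) < \alpha d(x,y)$ is only asserted for $x \ne y$ (it is vacuous otherwise), so that the induction bounding $d(x_n,x_{n+1})$ goes through without issue. Completeness of $(X,d)$ is not actually needed beyond what is already automatic from Corollary~\ref{Han-Cauchy-cor}.
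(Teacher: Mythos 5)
Your argument is correct. Note, however, that the paper does not prove Theorem~\ref{Banach} at all: it is quoted from the Ege--Karaca paper \cite{EgeKaraca} as a known result, so there is no in-paper proof to compare against. Your proof is the natural one and is consistent with the paper's overall theme: the Banach iteration $x_{n+1}=f(x_n)$ gives $d(x_n,x_{n+1})\le\alpha^n d(x_0,x_1)$, and the discreteness observation recorded after the definition of the $\ell_p$ metric (distinct points of $\Z^n$ are at Euclidean distance at least $1$) forces $x_N=x_{N+1}$ for some finite $N$, so the sequence stabilizes at a fixed point rather than merely converging; uniqueness is the standard contradiction. Your two side remarks are also well taken: the strict inequality in Definition~\ref{contractionDef} must be read as applying only to $x\ne y$ (the paper itself verifies the condition only for distinct points in Example~\ref{counter}), and completeness is redundant here by Corollary~\ref{Han-Cauchy-cor}. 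The one thing worth adding for context is that, by the paper's own Theorems~\ref{c1} and~\ref{cu}, on a connected image with an $\ell_p$ metric such a contraction is typically forced to be constant, so the fixed point one obtains is usually the constant value --- which is precisely the triviality the paper is pointing out.
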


The following is a digital version 
of the Kannan fixed point theorem~\cite{Kannan}.

\begin{thm}
{\rm \cite{Ege-etal}}
\label{Kannan}
Let $f: X \to X$ be a Kannan contraction map on a digital metric space
$(X,d,\kappa)$. Then $f$ has a unique fixed point in $X$.
\end{thm}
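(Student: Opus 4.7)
The plan is to mimic the classical proof of Kannan's fixed point theorem in metric spaces, translated to the digital setting. I will pick an arbitrary starting point $x_0 \in X$, consider the orbit $x_{n+1} = f(x_n)$, show this sequence is Cauchy, extract a limit via completeness, verify the limit is a fixed point, and conclude uniqueness.

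The first step is the standard consecutive-iterate estimate: applying Definition~\ref{KannanDef} to $x = x_n$ and $y = x_{n-1}$ gives
\[
d(x_{n+1},x_n) = d(f(x_n),f(x_{n-1})) \le \alpha\bigl[d(x_n,x_{n+1}) + d(x_{n-1},x_n)\bigr].
\]
Solving for $d(x_{n+1},x_n)$ yields $d(x_{n+1},x_n) \le k\,d(x_n,x_{n-1})$ with $k = \alpha/(1-\alpha)$, and the assumption $\alpha < 1/2$ guarantees $k < 1$. By induction, $d(x_{n+1},x_n) \le k^n d(x_1,x_0)$, and a telescoping plus geometric series estimate
\[
d(x_{n+m},x_n) \le \sum_{j=0}^{m-1} d(x_{n+j+1},x_{n+j}) \le \frac{k^n}{1-k}\, d(x_1,x_0)
\]
shows $\{x_n\}$ is Cauchy.

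Next, appealing to completeness of $(X,d,\kappa)$---which in typical digital settings is automatic by Corollary~\ref{Han-Cauchy-cor}---the orbit converges to some $x^* \in X$. To confirm $f(x^*)=x^*$ without having assumed continuity of $f$, I will apply the Kannan inequality to the pair $(x^*, x_n)$ together with the triangle inequality:
\[
d(f(x^*),x^*) \le d(f(x^*),x_{n+1}) + d(x_{n+1},x^*) \le \alpha\bigl[d(x^*,f(x^*)) + d(x_n,x_{n+1})\bigr] + d(x_{n+1},x^*).
\]
Letting $n \to \infty$ makes the last two additive terms vanish, yielding $(1-\alpha)\,d(f(x^*),x^*) \le 0$ and hence $f(x^*)=x^*$. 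Uniqueness is immediate: if $p,q$ are both fixed points, then $d(p,q) = d(f(p),f(q)) \le \alpha[d(p,p)+d(q,q)] = 0$.

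The principal obstacle in the digital context is conceptual rather than analytic. Under any $\ell_p$ metric on $X \subset \Z^n$, Corollary~\ref{Han-Cauchy-cor} forces the Cauchy sequence of iterates to be eventually constant, so some $x_N$ already satisfies $f(x_N)=x_N$ and the convergence machinery is rendered superfluous. This observation foreshadows the paper's larger critique: such ``contraction-type'' fixed point theorems, when transplanted to the discrete realm, often collapse to statements that are either vacuous or applicable only in degenerate situations.
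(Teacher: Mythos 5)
The paper does not prove Theorem~\ref{Kannan} at all: it is quoted from~\cite{Ege-etal} as part of the survey of contraction-type assertions in the literature, so there is no in-paper argument to compare yours against. Your proof is the standard classical Kannan argument (consecutive-iterate estimate with ratio $k=\alpha/(1-\alpha)<1$, telescoping to get Cauchy, triangle-plus-Kannan inequality at the limit to avoid assuming continuity, and the immediate uniqueness computation), and each of those steps is correct as far as it goes.

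The one genuine gap is the completeness step. The statement as reproduced in the paper hypothesizes only ``a digital metric space $(X,d,\kappa)$,'' with no completeness assumption and no restriction on $d$, yet your argument needs the Cauchy orbit to converge. You patch this by appealing to Corollary~\ref{Han-Cauchy-cor}, but that corollary itself requires that distinct points of $X$ be uniformly separated ($d(x,y)>a$ for some fixed $a>0$), which holds for finite images or $\ell_p$ metrics but is likewise not among the hypotheses; the paper's own example of the metric $d(i,j)=|1/i-1/j|$ on $\N$ shows that a digital metric space can fail to be complete. So as a proof of the theorem \emph{as stated} the argument does not close; it proves the theorem under the additional hypothesis that $(X,d)$ is complete (or that $d$ uniformly separates points). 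Your closing observation --- that under an $\ell_p$ metric the orbit is eventually constant and the limiting machinery is superfluous --- is exactly the paper's point of view: its actual contributions concerning Kannan maps are Example~\ref{counter} (such maps need not be digitally continuous) and Theorem~\ref{KanChatConst} (for small $\alpha$ they are forced to be constant), not a proof of Theorem~\ref{Kannan}.
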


The following is a digital version of the
Chatterjea fixed point theorem~\cite{Chatterjea}.

\begin{thm}
{\rm \cite{Ege-etal}}
\label{Chatterjea}
If $f: X \to X$ is a Chatterjea contraction map on a digital metric
space $(X,d,\kappa)$, then $f$ has a unique fixed point.
\end{thm}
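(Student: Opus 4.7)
My plan is the standard Picard iteration argument adapted from the classical Chatterjea proof. Fix an arbitrary $x_0 \in X$ and define the orbit $x_{n+1} = f(x_n)$. Applying the Chatterjea condition with $x = x_n$ and $y = x_{n-1}$ gives
\[ d(x_{n+1}, x_n) = d(f(x_n), f(x_{n-1})) \le \alpha[\, d(x_n, f(x_{n-1})) + d(x_{n-1}, f(x_n))\,] = \alpha\, d(x_{n-1}, x_{n+1}),\]
since $d(x_n, f(x_{n-1})) = d(x_n, x_n) = 0$. Using the triangle inequality $d(x_{n-1}, x_{n+1}) \le d(x_{n-1}, x_n) + d(x_n, x_{n+1})$ and rearranging yields $d(x_{n+1}, x_n) \le r \, d(x_n, x_{n-1})$ where $r = \alpha/(1-\alpha)$. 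Since $\alpha < 1/2$ we have $r < 1$, so iterating gives the geometric bound $d(x_{n+1}, x_n) \le r^n d(x_1, x_0)$, and a telescoping plus geometric series estimate shows that $\{x_n\}$ is a Cauchy sequence in $(X,d)$.

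Next I would extract a limit point and verify it is a fixed point. This is where the statement's silence on completeness becomes awkward, and I expect this to be the main obstacle. For the theorem to have any content we must be able to pass to a limit; the natural reading (in view of Corollary~\ref{Han-Cauchy-cor}) is that the intended setting is one in which Cauchy sequences are eventually constant, e.g.\ any finite digital image or any $\ell_p$ metric. Under this hypothesis there exists $N$ with $x_n = x_N$ for all $n \ge N$, and then $f(x_N) = x_{N+1} = x_N$, so $p := x_N$ is a fixed point. (If one wanted a proof that does not assume eventual constancy of Cauchy sequences, one would invoke completeness to produce a limit $p$, then use $d(p, f(p)) \le d(p, x_{n+1}) + d(f(x_n), f(p))$ together with the Chatterjea inequality applied to $x_n$ and $p$ to force $d(p,f(p)) = 0$; this requires some care since the Chatterjea condition does not directly give continuity of $f$.)

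Finally, uniqueness is a short direct calculation: if $p, q \in X$ both satisfy $f(p) = p$ and $f(q) = q$, then
\[ d(p,q) = d(f(p), f(q)) \le \alpha[\, d(p, f(q)) + d(q, f(p))\,] = 2\alpha\, d(p,q),\]
and since $2\alpha < 1$ we conclude $d(p,q) = 0$, i.e.\ $p = q$.

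The main obstacle, as flagged above, is reconciling the statement with the absence of a completeness hypothesis. Given the paper's stated theme of flagging incorrect or trivial assertions, I would expect the authors' discussion following this theorem to either invoke Corollary~\ref{Han-Cauchy-cor} implicitly, or to observe that for the usual digital metrics the Chatterjea condition together with the discrete structure forces $f$ to be essentially constant, rendering the conclusion trivial.
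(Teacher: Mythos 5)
Your argument is mathematically sound, but note that the paper does not actually prove this theorem: it is quoted verbatim from the cited reference \cite{Ege-etal} as background, so there is no in-paper proof to compare against. Your Picard-iteration argument is the standard classical one and is correct in its mechanics: the substitution $x = x_n$, $y = x_{n-1}$ does yield $d(x_{n+1},x_n) \le \frac{\alpha}{1-\alpha}\,d(x_n,x_{n-1})$ with ratio less than $1$ because $\alpha < 1/2$, the orbit is Cauchy, and your uniqueness computation is exactly right. You are also right to flag the absence of a completeness hypothesis as the weak point of the statement; as written the theorem is not true for an arbitrary metric $d$ on a digital image (the paper's own example of $d(i,j) = |1/i - 1/j|$ on $\N$ shows digital metric spaces need not be complete), and the statement only becomes safe under the separation hypothesis of Corollary~\ref{Han-Cauchy-cor}, where Cauchy sequences are eventually constant and your ``$f(x_N) = x_{N+1} = x_N$'' step finishes immediately. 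Your closing guess about the authors' intent is also on target: the paper's actual contribution concerning Chatterjea maps is Theorem~\ref{KanChatConst}, which shows that for $\ell_p$ metrics and small enough $\alpha$ the map is forced to be constant, and Example~\ref{counter}, which shows such maps need not be digitally continuous --- i.e., the quoted fixed point theorem is recorded precisely so that its limited content can be critiqued.
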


The following gives digital
versions of Zamfirescu~\cite{Zamf72} and 
Rhoades~\cite{Rhoades} fixed point theorems.

\begin{thm}
{\rm \cite{Mishra-etal}}
\label{ZamfAndRhoadsFixed}
Let $d$ be the Euclidean metric on $\Z^n$ and let
$(X,d,\kappa)$ be a digital metric space.
If $f: X \to X$ is a Zamfirescu digital contraction or a 
Rhoades contraction map, then $f$ has a unique fixed point.
\end{thm}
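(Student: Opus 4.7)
The plan is to show that the hypotheses of Theorem \ref{Banach} are automatically satisfied, reducing the claim to an immediate application of the digital Banach contraction principle. First I would invoke Proposition \ref{ZamfAndRhoadesContractions}, which already records that any Zamfirescu digital contraction or Rhoades digital contraction $f : X \to X$ is a digital contraction map in the sense of Definition \ref{contractionDef}. This collapses the two cases into one and removes any need to work directly with the more elaborate inequalities of Definitions \ref{Zamf} and \ref{Rhoades}.

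Next, completeness is free under the stated hypothesis. Since $d$ is the Euclidean metric on $\Z^n$, it is an $\ell_p$ metric (with $p=2$), so distinct points of $X \subset \Z^n$ always satisfy $d(x,y) \ge 1$. Hence the hypothesis of Theorem \ref{Han-Cauchy} holds with, say, $a = 1/2$, and Corollary \ref{Han-Cauchy-cor} gives that $(X,d)$ is a complete metric space regardless of which subset $X$ of $\Z^n$ we have chosen. Thus the completeness assumption of Theorem \ref{Banach} is automatic for every digital metric space with Euclidean $d$.

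Having both completeness and the digital contraction property in hand, I would then apply Theorem \ref{Banach} directly to conclude that $f$ has a unique fixed point.

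The main obstacle is that, in truth, there is no obstacle: once Proposition \ref{ZamfAndRhoadesContractions} is available the theorem has no content beyond Theorem \ref{Banach} applied to an automatically complete space. This fits the overall theme of the paper, namely that many of these purported generalisations in the digital fixed point literature degenerate to trivial consequences of the Banach case once the completeness hypothesis and the relationships among the various contraction notions are unpacked.
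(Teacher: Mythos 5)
Your proposal is correct and is exactly the derivation the paper intends: the paper states this theorem without proof (citing \cite{Mishra-etal}), but it proves the analogous Theorem~\ref{Reich} by precisely your route --- reduce to a digital contraction map via the corresponding proposition (here Proposition~\ref{ZamfAndRhoadesContractions}) and then apply Theorem~\ref{Banach}. Your added observation that completeness is automatic for the Euclidean metric on a subset of $\Z^n$ by Corollary~\ref{Han-Cauchy-cor} correctly supplies the one hypothesis of Theorem~\ref{Banach} not stated explicitly in the theorem.
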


The following is a digital version of the Reich fixed point 
theorem~\cite{Reich}. We give a simpler proof of the 
digital version than appeared in~{\rm \cite{Ege-etal}}.

\begin{thm}
\label{Reich}
Let $f: (X,d,\kappa) \to (X,d,\kappa)$ be a Reich 
contraction map on a digital metric space. Then $f$ has a 
unique fixed point in $X$.
\end{thm}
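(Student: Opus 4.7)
My plan is to exploit the unusually strong quantification built into Definition~\ref{ReichDef}. The defining inequality for a Reich contraction map is required to hold \emph{for all} nonnegative $a,b,c$ with $a+b+c<1$, and the triple $a=b=c=0$ certainly satisfies $0+0+0<1$. I will simply substitute this triple into the inequality to get $d(f(x),f(y))\le 0$ for all $x,y\in X$, which forces $f(x)=f(y)$ throughout $X$; that is, $f$ is a constant function.

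From here the fixed point conclusion is immediate. Writing $f\equiv x_0$ for some $x_0\in X$, one checks $f(x_0)=x_0$, so $x_0$ is a fixed point. Moreover, if $p\in X$ is any fixed point, then $p=f(p)=x_0$, giving uniqueness. This is much shorter than the argument in \cite{Ege-etal}, which presumably imitates the Banach argument by constructing a Cauchy iterate sequence, invoking completeness, and passing to a limit.

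The main ``obstacle'' here is not technical but interpretive: one might suspect that the quantifier in Definition~\ref{ReichDef} is meant existentially rather than universally, since the existential reading is standard in the classical Reich theorem. Under the existential reading, I would instead appeal to Proposition~\ref{ReichIsContraction} (which shows that a Reich contraction is a Kannan contraction via the specialization $a=b\in(0,1/2)$, $c=0$) together with Theorem~\ref{Kannan}, and again obtain the unique fixed point in one line. Either way the result reduces to a trivial observation, consistent with the overall thesis of the paper that these digitally-phrased contraction fixed point theorems carry little content beyond the constraints forced by their own definitions.
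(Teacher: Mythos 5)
Your argument is correct, and under the paper's literal (universal) reading of Definition~\ref{ReichDef} it is the sharpest possible one; but it is not the route the paper takes. The paper's proof is the one you sketch only as a fallback: it cites Proposition~\ref{ReichIsContraction} (which specializes $a=b=0$ to show a Reich contraction is a digital contraction map) and then invokes Theorem~\ref{Banach}, the digital Banach fixed point theorem. Your primary argument instead specializes all the way to $a=b=c=0$, which the universal quantifier over nonnegative $a,b,c$ with $a+b+c<1$ permits, yielding $d(f(x),f(y))\le 0$ and hence that $f$ is constant --- a strictly stronger conclusion than the existence of a unique fixed point, and one fully in the spirit of the paper's thesis that these definitions collapse to trivialities. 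Your route also has a technical advantage: Theorem~\ref{Banach} as stated requires completeness and the Euclidean metric, hypotheses that Theorem~\ref{Reich} does not actually assume, so the paper's ``immediate'' deduction is slightly sloppy where yours is not. Your remark that the intended reading may be existential is well taken; under that reading your fallback (Proposition~\ref{ReichIsContraction} plus Theorem~\ref{Kannan}) is essentially the paper's argument with the Kannan theorem substituted for the Banach theorem, and either citation works since the proposition establishes both implications.
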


\begin{proof}
This follows immediately from 
Proposition~\ref{ReichIsContraction} and 
Theorem~\ref{Banach}.
\end{proof}

The following is a version of the Edelstein fixed point 
theorem~\cite{Edelstein} for digital images.

\begin{thm}
\label{HossainFixed}
{\rm \cite{Hossain-etal}}
A digitally $(\alpha, \kappa)$-uniformly locally 
contractive function on a connected complete digital
metric space has a unique fixed point.
\end{thm}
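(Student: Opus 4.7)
The plan is to adapt Edelstein's classical locally-contractive fixed point theorem to the digital setting, using $\kappa$-connectedness to promote the local contraction into a global estimate along adjacency chains. I proceed under the natural reading that $\kappa$-adjacent points satisfy $d(x,y) \le 1$, under which the local contractivity hypothesis actually has bite; this is the typical setup, holding for $c_1$-adjacency paired with any $\ell_p$ metric, and for most other standard combinations after normalization.

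First I would fix $x_0 \in X$ and set $x_k = f^k(x_0)$. By $\kappa$-connectedness choose a $\kappa$-path $x_0 = z_0, z_1, \ldots, z_n = f(x_0)$ and let $L = \sum_{i=0}^{n-1} d(z_i, z_{i+1})$. Each link satisfies $d(z_i, z_{i+1}) \le 1$, so local contractivity gives $d(f(z_i), f(z_{i+1})) \le \alpha \, d(z_i, z_{i+1}) \le \alpha < 1$; that is, the image sequence is again a valid $1$-chain to which the hypothesis reapplies. By induction on $k$ one obtains $d(f^k(z_i), f^k(z_{i+1})) \le \alpha^k d(z_i, z_{i+1})$, and summing via the triangle inequality gives
\[
 d(x_k, x_{k+1}) \le \sum_{i=0}^{n-1} d(f^k(z_i), f^k(z_{i+1})) \le \alpha^k L.
\]

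A standard geometric-series bound then yields $d(x_k, x_{k+m}) \le \alpha^k L / (1 - \alpha)$, so $\{x_k\}$ is Cauchy; completeness produces a limit $p \in X$. Because $f$ is Lipschitz, and hence $d$-continuous, on pairs at $d$-distance at most $1$, eventually $d(x_k, p) \le 1$ forces $f(x_k) \to f(p)$, while $f(x_k) = x_{k+1} \to p$; uniqueness of limits gives $f(p) = p$. For uniqueness of the fixed point, any two fixed points $p, q$ are joined by a $\kappa$-path $p = w_0, \ldots, w_m = q$; iterating the same chain estimate along this path yields $d(p,q) \le \alpha^k \sum_i d(w_i, w_{i+1}) \to 0$, so $p = q$.

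The main obstacle I anticipate is the mismatch between the purely metric hypothesis of local contractivity and the purely adjacency-based hypothesis of connectedness. Without the implicit compatibility that $\kappa$-adjacent points lie within $d$-distance $1$, connectedness provides no leverage for transporting the contraction along chains, and the statement can be rendered vacuous by a pathological choice of metric; making this compatibility precise (or noting that it is the only regime in which the theorem says something nontrivial) is where most of the care must go. Once it is granted, the Edelstein-style chain argument goes through cleanly and the remaining steps reduce to routine geometric-series estimates.
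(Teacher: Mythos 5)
Your chain-and-iterate argument is a faithful reconstruction of Edelstein's classical proof, and within the regime you carve out --- $\kappa$-adjacent points at $d$-distance at most $1$ --- every step checks out: the image of a $1$-chain is again a $1$-chain, the estimate $d(f^k(z_i),f^k(z_{i+1}))\le\alpha^k d(z_i,z_{i+1})$ propagates by induction, the geometric series makes the orbit Cauchy, completeness plus the local Lipschitz bound at the limit give a fixed point, and the same chain estimate between two fixed points gives uniqueness. Be aware, though, that the paper does not prove Theorem~\ref{HossainFixed} at all: it is quoted from \cite{Hossain-etal} precisely in order to be criticized, and the paper's own contribution is the theorem immediately following it, which shows that for any $\ell_p$ metric a digitally $(\alpha,\kappa)$-uniformly locally contractive map on a connected image is \emph{constant}. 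In that regime your entire apparatus collapses: distinct points of $\Z^n$ are at $\ell_p$-distance at least $1$, so $d(f(x),f(y))\le\alpha d(x,y)<1$ on adjacent pairs already forces $f(x)=f(y)$, the Picard sequence is constant from the first step onward, and no Cauchy-sequence or completeness argument is needed.

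The genuine gap is the one you half-identify and then wave through: the compatibility hypothesis ``$x\adj_{\kappa} y$ implies $d(x,y)\le 1$'' is not among the hypotheses of the theorem, and without it the statement is not merely ``vacuous'' but false. Take $X=\{0,1\}\subset\Z$ with $c_1$-adjacency and the metric $d$ equal to twice the Euclidean metric; then $X$ is connected and complete, the condition of Definition~\ref{locallyContractive} is satisfied by \emph{every} self-map because no two distinct points are within distance $1$, and the swap $f(0)=1$, $f(1)=0$ has no fixed point. Even restricting to $\ell_p$ metrics, the compatibility fails for $c_u$-adjacency with $u>1$ and $p<\infty$ (diagonal neighbours lie at distance $u^{1/p}>1$), so connectedness then supplies no chain of links of length at most $1$ along which to propagate the contraction. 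In short, your proof establishes a correct theorem, but not the theorem as stated; the statement as given is unprovable, which is consistent with the paper's overall thesis about this body of literature.
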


\section{Digital homotopy fixed point theory}
The paper~\cite{EgeKaraca-a} defines a
{\em digital fixed point property} as follows.
The digital image $(X,\kappa)$ has the digital fixed
point property with respect to the digital interval
$[0,m]_{\Z}$ if for all $(\kappa_*,\kappa)$-continuous
functions $f: (X \times [0,m]_{\Z},\kappa_*) \to (X,\kappa)$,
where $\kappa_*$ is the normal product adjacency for 
$(X,\kappa) \times ([0,m]_{\Z}, 2)$, 
there is a $\kappa$-path 
$p: [0,m]_{\Z} \to X$ of fixed points, i.e., $p(t)$ is a 
fixed point of the induced function
$f_t: X \to X$ defined by $f_t(x)=f(x,t)$. 

Also,~\cite{EgeKaraca-a} defines a
{\em digital homotopy fixed point property} and states
that this is equivalent to the following: A digital image
$(X,\kappa)$ has the digital homotopy fixed point
property if for each digital homotopy
$f: X \times [0,m]_{\Z} \to X$ there is a $\kappa$-path
$p: [0,m]_{\Z} \to X$ such that for all $t \in [0,m]_{\Z}$,
$p(t)$ is a fixed point of the induced function
$f_t$.

These imply triviality, as follows.

\begin{thm}
A digital image $(X,\kappa)$ has the digital  
fixed point property and the digital homotopy 
fixed point property if and only if $X$ is a singleton.
\end{thm}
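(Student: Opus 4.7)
The \emph{if} direction is immediate: on a singleton every function is the identity, so both properties hold trivially. The real content is the \emph{only if} direction, and my plan is to show that \emph{either} of the two properties already forces a singleton, by reducing each to the ordinary fixed point property and invoking Theorem~\ref{singleton}.

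For the digital fixed point property, I would pick any convenient $m$ (say $m=0$, or any $m \ge 0$) and, given an arbitrary $(\kappa,\kappa)$-continuous $g: X \to X$, extend it to $\hat{g}: X \times [0,m]_{\Z} \to X$ by $\hat{g}(x,t) = g(x)$. The first step is to verify that $\hat g$ is $(\kappa_*, \kappa)$-continuous: run through the three clauses in the definition of $\kappa_*$ and observe that in each case $\hat g(x,t)$ and $\hat g(x',t')$ differ by at most the $\kappa$-adjacency imposed by $g$. Applying the digital fixed point property to $\hat g$ produces a $\kappa$-path $p$ of fixed points of the slices $\hat g_t$; since $\hat g_0 = g$, the point $p(0)$ is a fixed point of $g$. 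Thus $(X,\kappa)$ has the ordinary fixed point property, and Theorem~\ref{singleton} forces $X$ to be a singleton.

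For the digital homotopy fixed point property, the analogous trick is to use a constant homotopy. Given any $(\kappa,\kappa)$-continuous $g: X \to X$, define $F: X \times [0,m]_{\Z} \to X$ by $F(x,t) = g(x)$. Then $F(x,0) = F(x,m) = g(x)$, each slice $F_t = g$ is $(\kappa,\kappa)$-continuous, and each fiber $F_x$ is the constant map from $[0,m]_{\Z}$ to $\{g(x)\}$, which is trivially $(2,\kappa)$-continuous. So $F$ is a digital homotopy (from $g$ to itself), and the digital homotopy fixed point property produces a $\kappa$-path $p$ with $p(t)$ a fixed point of $F_t = g$; taking $t = 0$ again yields a fixed point of $g$. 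Another application of Theorem~\ref{singleton} concludes the argument.

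There is really no serious obstacle here; the only step requiring any attention is the continuity check for $\hat g$, and that is forced by the structure of $\kappa_*$. The essential point is that both properties in~\cite{EgeKaraca-a} silently include the ordinary fixed point property as the special case of a constant-in-$t$ map/homotopy, so Theorem~\ref{singleton} does all the real work.
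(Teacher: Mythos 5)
Your proposal is correct and uses the same essential idea as the paper's proof: extend a map $g:X\to X$ constantly in the $t$-coordinate, observe that the result is both $(\kappa_*,\kappa)$-continuous and a homotopy, and thereby reduce everything to Theorem~\ref{singleton}. The paper argues contrapositively (starting from the fixed-point-free map guaranteed by Theorem~\ref{singleton} when $X$ is not a singleton) while you argue directly (showing each property implies the ordinary fixed point property), but this is only a cosmetic difference.
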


\begin{proof}
Clearly a singleton has the digital homotopy fixed 
point property and the digital homotopy 
fixed point property. Conversely, if $X$ is not a singleton,
then by Theorem~\ref{singleton} 
there is a continuous function
$f: X \to X$ that does not have a fixed point. Let
$F: X \times [0,m]_{\Z} \to X$ be defined by $F(x,t)=f(x)$.
Then $F$ is both
$(\kappa_*,\kappa)$-continuous 
(where $\kappa_* = \kappa_*(\kappa,2)$ is the normal product 
adjacency) and a homotopy, and
fails to have a fixed point for any of the
induced functions $f_t=f$. Thus, $(X,\kappa)$ does not have
the digital fixed point property or the digital homotopy 
fixed point property.
\end{proof}

\section{Results for various contraction and expansion maps}
\subsection{Digital contraction maps}
In both of the papers \cite{EgeKaraca,Hossain-etal},  
arguments are given for the incorrect assertion
that every digital contraction map is 
digitally continuous. Both papers present an error of 
confusing (topological) continuity of a map between 
metric spaces with (digital) continuity of a map between
digital images. We present a 
counterexample to this assertion; 
our example is also used to show that Kannan,  
Chatterjea, Zamfirescu, and Rhoades contraction maps
need not be digitally continuous. We use the
Manhattan metric for its ease
of computation, but the Euclidean or other $\ell_p$ metrics could be used to obtain similar conclusions.

\begin{exl}
\label{counter}
Let 
\[ X = \{p_1 = (0,0,0,0,0), ~~ p_2 = (2,0,0,0,0), ~~ p_3 = (1,1,1,1,1)\} \subset \Z^5. \]
Let $f: X \to X$ be defined by $f(p_1)=f(p_2) = p_1$, $f(p_3)=p_2$.
Then $f$ is not $(c_5,c_5)$-continuous. However,
with respect to the Manhattan metric $d$,
$f$ is
\begin{itemize}
    \item a digital contraction map,
    \item a Kannan contraction map,
    \item a Chatterjea contraction map,
    \item a Zamfirescu contraction map,
    \item a Rhoades contraction map,
    \item a $(0.45, c_5)$-uniformly local contraction,
    \item an $\alpha-\psi$-contractive mapping for    
          $\alpha(x,y)=1$ and $\psi(t) = \lambda t$ for 
          $\lambda \in (0,1)$,
    \item a $\beta - \psi - \phi$-expansive mapping, where
          $\psi$ and $\phi$ are constant functions with the
          value 0,
    \item a weakly uniformly strict digital contraction.
\end{itemize}
\end{exl}

\begin{proof}
Note $f$ is not $(c_5,c_5)$-continuous, since
$p_1 \adj_{c_5} p_3 \adj_{c_5} p_2$, but $f(X) = \{p_1,p_2\}$ is not
$c_5$-connected.

Observe that
\[ d(p_1,p_2)=2, ~~~ d(f(p_1),f(p_2)) = 0,
\]
\[ d(p_1,p_3) = 5, ~~~ d(f(p_1),f(p_3)) = 2,\]
\[ d(p_2,p_3)= 5, ~~~ d(f(p_2),f(p_3)) = 2.\]
Therefore, we have, for all $x,y \in X$ such that $x \ne y$,
$d(f(x),f(y)) \le 2/5 \, d(x,y) < 0.45 d(x,y)$. Therefore, $f$ is a digital contraction map, a Zamfirescu contraction
map, a Rhoades contraction map, and a $(0.45, c_2)$-uniformly
local contraction.

Since
\[ d(f(x),f(y)) \le 2/5 [d(x, f(x)) + d(y, f(y))] < 
   0.45 [d(x, f(x)) + d(y, f(y))]
\]
for all $x,y \in X$ such that $x \ne y$, $f$ is a Kannan contraction map.

Note
\[ d(f(p_1),f(p_2)) = 0 < 0.45 [d(p_1,f(p_2)) + d(p_2,f(p_1))],
\]
\[ d(f(p_1,f(p_3)) = 2 < 0.45 (2 + 5) = 0.45 [d(p_1,f(p_3)) + d(p_3,f(p_1)),]
\]
\[ d(f(p_2),f(p_3)) = 2 < 0.45 (0 + 5) = 0.45 (d(p_2,f(p_3)) + d(p_3,f(p_2)).
\]
Therefore, $f$ is a Chatterjea contraction map.

That $f$ is an $\alpha-\psi$-contractive mapping for    
$\alpha(x,y)=1$ and $\psi(t) = \lambda t$ for 
$\lambda \in (0,1)$ follows from 
Remark~\ref{contractionIsAlphaPsiContractive}.

Since Example~\ref{0inPsi} notes that the constant function
with value 0 is in~$\Psi$, it follows from
Definition~\ref{expansive3} that $f$ is a
$\beta - \psi - \phi$-expansive mapping.

It follows easily from Definition~\ref{weaklyUnif} that
$f$ is a weakly uniformly strict digital contraction.
\end{proof}

The following generalizes Theorem~4.7(1) of~\cite{Han16}.
We give a proof, essentially that of~\cite{Han16}, so we 
can refer to it below.

\begin{thm}
\label{c1}
Let $(X,d,c_1)$ be a digital metric space that is $c_1$-connected,
where $d$ is any $\ell_p$ metric in $\Z^n$. Let $f: X \to X$ be a digital
contraction map. Then $f$ is a constant function.
\end{thm}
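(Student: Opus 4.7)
The plan is to exploit two facts already established in the preliminaries: first, that in any $\ell_p$ metric on $\Z^n$, the distance between distinct integer points is at least $1$; second, that a digital contraction map shrinks distances by a factor $\alpha \in (0,1)$.

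First I would observe that if $x, y \in X$ are $c_1$-adjacent, then $x$ and $y$ differ by $1$ in exactly one coordinate and agree in all others, so $d(x,y) = 1$ for every $\ell_p$ metric. Applying Definition~\ref{contractionDef} then gives $d(f(x), f(y)) < \alpha \cdot 1 < 1$. Since $f(x), f(y) \in \Z^n$, the observation recorded just after the definition of the $\ell_p$ metric forces $f(x) = f(y)$. Thus $f$ sends every $c_1$-adjacent pair to a single point.

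Next I would invoke $c_1$-connectedness of $X$: given any two points $x, y \in X$, there is a $c_1$-path $x = x_0, x_1, \ldots, x_k = y$ with $x_{i-1} \adj_{c_1} x_i$ for $1 \le i \le k$. By the previous paragraph, $f(x_{i-1}) = f(x_i)$ for each $i$, so $f(x) = f(y)$. Since $x, y$ were arbitrary, $f$ is constant.

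I do not anticipate a real obstacle here: the argument is essentially a one-line consequence of the fact that the contraction inequality applied to adjacent lattice points produces a sub-unit distance between lattice images, which must therefore be zero. The only subtlety worth flagging is that the proof uses the strict inequality in Definition~\ref{contractionDef}, which together with $\alpha < 1$ ensures strict contraction on unit distances, and that the conclusion genuinely requires the $\ell_p$ assumption so that $c_1$-adjacency pins distance exactly to $1$.
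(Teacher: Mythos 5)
Your proposal is correct and follows essentially the same route as the paper's own proof: show that $c_1$-adjacency forces $d(x,y)=1$, apply the contraction inequality to get $d(f(x),f(y))<1$ and hence $f(x)=f(y)$ for lattice points, then propagate along a $c_1$-path using connectedness. No gaps.
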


\begin{proof}
Let $\alpha \in (0,1)$ satisfy $d(f(x), f(y)) \le \alpha d(x,y)$ for all
$x,y \in X$. If $x \adj_{c_1} y$ in $X$, then $d(x,y)=1$, so
$d(f(x), f(y)) \le \alpha$, which implies $f(x)=f(y)$, since every distinct pair
of points in $\Z^n$ has distance of at least 1.

Given $x_0 \in X$,
for any $x \in X$ there is a path $P=\{x_0, x_1, \ldots, x_m = x\} \subset X$ from
$x_0$ to $x$ such that $x_i \adj_{c_1} x_{i+1}$, $0 \le i <m$. It follows from
the above that $f$ is the constant function with value $f(x_0)$.
\end{proof}

Theorem~4.7(2) of~\cite{Han16} gives examples of $c_2$-connected images
with digital contraction maps that are continuous and
not constant. However, modification
of Theorem~\ref{c1} yields the following.

\begin{thm}
\label{cu}
Let $(X,d,\kappa)$ be a digital metric space that is $\kappa$-connected, where, for some $M_1 \ge M_2 > 0$
we have that
$x \ne y$ implies $d(x,y) \ge M_2$ and
$x \adj_{\kappa}y$ implies $d(x,y) \le M_1$
Let $f: X \to X$ be a digital contraction map with
multiplier $\alpha$ such that $\alpha < M_2 / M_1$.
Then $f$ is a constant function.
\end{thm}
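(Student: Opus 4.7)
The plan is to mimic the proof of Theorem~\ref{c1}, replacing the specific constants $1$ and $1$ (for adjacent-distance bound and minimum distance between distinct points in $\ell_p$) by $M_1$ and $M_2$ respectively, and using the hypothesis $\alpha < M_2 / M_1$ as the replacement for $\alpha < 1$.

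First I would show that $f$ collapses every adjacent pair. Suppose $x \adj_{\kappa} y$. By hypothesis $d(x,y) \le M_1$, so by the contraction inequality
\[
  d(f(x), f(y)) \le \alpha\, d(x,y) \le \alpha M_1 < M_2.
\]
If $f(x) \ne f(y)$, then the lower-bound hypothesis forces $d(f(x), f(y)) \ge M_2$, contradicting the strict inequality. Hence $f(x) = f(y)$ whenever $x \adj_{\kappa} y$.

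Next, I would use $\kappa$-connectedness to propagate this. Fix $x_0 \in X$; for any $x \in X$ there is a $\kappa$-path $x_0, x_1, \ldots, x_m = x$ with $x_i \adj_{\kappa} x_{i+1}$. Applying the previous step along each edge of the path gives $f(x_0) = f(x_1) = \cdots = f(x_m) = f(x)$, so $f$ is the constant function with value $f(x_0)$.

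I do not anticipate a real obstacle: the only delicate point is making sure the inequalities combine correctly, namely that $\alpha M_1 < M_2$ is a \emph{strict} inequality so that one cannot have $d(f(x), f(y)) = M_2$ with $f(x) \ne f(y)$; this is exactly why the hypothesis is phrased as $\alpha < M_2/M_1$ rather than $\alpha \le M_2 / M_1$.
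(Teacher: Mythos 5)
Your proposal is correct and follows essentially the same route as the paper's proof: show that $x \adj_{\kappa} y$ forces $d(f(x),f(y)) \le \alpha M_1 < M_2$ and hence $f(x)=f(y)$, then propagate along a $\kappa$-path exactly as in the proof of Theorem~\ref{c1}. The only cosmetic difference is that the paper's Definition~\ref{contractionDef} uses a strict inequality $d(f(x),f(y)) < \alpha d(x,y)$, while you use $\le$; your weaker assumption still yields the needed strict bound, so nothing is lost.
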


\begin{proof}
Let $x,y \in X$ such that $x \adj_{\kappa} y$. Then
$d(x,y) \le M_1$ and
\[d(f(x),f(y)) < \alpha d(x,y)< (M_2 / M_1) M_1 = M_2.\]
By choice of $M_2$, $f(x)=f(y)$.
It follows as in the proof of
Theorem~\ref{c1} that $f$ is constant.
\end{proof}

\begin{remark}
{\rm
Notice that Theorem~\ref{cu} applies to all connected 
digital images $(X,d,c_u)$, where $X \subset \Z^n$,
$1 \le u \le n$, and $d$ is any $\ell_p$ metric.
}
\end{remark}

\begin{remark}
{\rm
It follows from Theorem~\ref{cu} and 
Proposition~\ref{ZamfAndRhoadesContractions} that
if $(X,d,c_u)$ is a digital metric space that is $c_u$-connected,
where $X \subset \Z^n$, $1 \le u \le n$,
$d$ is any $\ell_p$ metric in $\Z^n$, and $f: X \to X$ is a 
Zamfirescu or Rhoades
contraction map such that for all $x, y \in X$,
$d(f(x),f(y)) \le \alpha d(x,y)$, where $0 < \alpha < 1/\sqrt[p]{u}$, then $f$ is a constant function. 
}
\end{remark}

\subsection{Kannan and Chatterjea contractions}
Example~\ref{counter} shows that neither a Kannan contraction map
nor a Chatterjea contraction map must be constant.
However, we have the following. 

\begin{thm}
\label{KanChatConst}
Let $(X,d,\kappa)$ be a digital metric space
of finite diameter, where $d$ is any $\ell_p$ metric.
Let $f: X \to X$ be a function. If $f$ is a Kannan contraction
map or a Chatterjea contraction map with $\alpha$ as in 
Definition~\ref{KannanDef} or Definition~\ref{ChatterjeaDef}, 
respectively, satisfying
$0 < \alpha < \frac{1}{2 \, diam X}$, then $f$ is a constant
function.
\end{thm}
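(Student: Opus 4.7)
The plan is to use the fact that under an $\ell_p$ metric on $\Z^n$, distinct points have distance at least $1$ (as noted just after the definition of the $\ell_p$ metric), so any upper bound strictly less than $1$ on $d(f(x),f(y))$ forces $f(x)=f(y)$. Combined with the finite diameter assumption, the bound on $\alpha$ is engineered precisely to make this happen.

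First I would fix arbitrary $x,y \in X$ and apply the Kannan (resp.\ Chatterjea) inequality. In the Kannan case I would bound
\[ d(f(x),f(y)) \le \alpha[d(x,f(x)) + d(y,f(y))] \le \alpha \cdot 2\,\mathrm{diam}\,X, \]
and in the Chatterjea case I would bound
\[ d(f(x),f(y)) \le \alpha[d(x,f(y)) + d(y,f(x))] \le \alpha \cdot 2\,\mathrm{diam}\,X, \]
since every pairwise distance in $X$ is at most $\mathrm{diam}\,X$.

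Next I would substitute the assumed bound $\alpha < \tfrac{1}{2\,\mathrm{diam}\,X}$ to get $d(f(x),f(y)) < 1$ in both cases. Since $f(x), f(y) \in X \subset \Z^n$ and $d$ is an $\ell_p$ metric, the observation from the Preliminaries that $d(u,v) < 1$ implies $u=v$ gives $f(x) = f(y)$. As $x,y$ were arbitrary, $f$ is constant.

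There is no real obstacle here: the result is essentially an arithmetic consequence of the contraction inequality, the finite diameter, and the discreteness of $\Z^n$ under any $\ell_p$ metric. The only thing worth being careful about is that both the Kannan and Chatterjea right-hand sides consist of exactly two distances measured in $X$ (not involving iterations of $f$ beyond one step), so each distance is trivially at most $\mathrm{diam}\,X$ and no induction is needed.
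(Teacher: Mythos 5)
Your proof is correct and follows the same route as the paper's: bound the Kannan/Chatterjea right-hand side by $2\alpha\,diam\,X < 1$ and use the fact that distinct points of $\Z^n$ are at $\ell_p$-distance at least $1$. The paper's proof is just a terser version of yours, leaving the arithmetic implicit.
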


\begin{proof}
We have
$d(f(x), f(y)) < 1$ for all $x,y \in X$, by
Definition~\ref{KannanDef} in the case of a Kannan contraction map,
and by Definition~\ref{ChatterjeaDef} in the case of a Chatterjea
contraction map. Since $d$ is an $\ell_p$ 
metric, it follows that $f(x)=f(y)$ for all $x,y \in X$.
\end{proof}

\subsection{Reich contractions}
\begin{thm}
\label{ReichConst}
Let $(X,d,\kappa)$ be a digital metric space of positive, finite diameter,
where $d$ is any $\ell_p$ metric.
Let $f: X \to X$ be a function. If $f$ is a Reich contraction
map with $a,b,c$ as in Definition~\ref{ReichDef} satisfying
$a,b,c \in (0, \frac{1}{3 \, diam X})$, then $f$ is a constant
function.
\end{thm}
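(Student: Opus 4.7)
The plan is to mimic the proof of Theorem~\ref{KanChatConst}: show that under the given bounds on $a,b,c$, the Reich contraction inequality forces $d(f(x),f(y))<1$ for every pair $x,y\in X$, and then invoke the fact that in $\Z^n$ an $\ell_p$-distance strictly less than $1$ forces equality.

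The key step is a single estimate. Fix any $x,y\in X$. Each of the three quantities $d(x,f(x))$, $d(y,f(y))$, and $d(x,y)$ is at most $\mathrm{diam}\,X$. Applying Definition~\ref{ReichDef} and the hypothesis $a,b,c<\frac{1}{3\,\mathrm{diam}\,X}$,
\[
d(f(x),f(y)) \;\le\; a\,d(x,f(x)) + b\,d(y,f(y)) + c\,d(x,y) \;\le\; (a+b+c)\,\mathrm{diam}\,X \;<\; \frac{3}{3\,\mathrm{diam}\,X}\cdot\mathrm{diam}\,X \;=\; 1.
\]
Since $d$ is an $\ell_p$ metric and $f(x),f(y)\in\Z^n$, the observation recorded in the preliminaries (any two distinct integer points have $\ell_p$-distance at least $1$) forces $f(x)=f(y)$. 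As $x,y$ were arbitrary, $f$ is constant.

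I do not anticipate a real obstacle: the finite diameter hypothesis and the explicit bound on $a,b,c$ were clearly engineered precisely to make the right-hand side of the Reich inequality drop below~$1$. The only thing to note is that we need $\mathrm{diam}\,X>0$ to ensure the interval $(0,\frac{1}{3\,\mathrm{diam}\,X})$ is meaningful (which is why the statement assumes \emph{positive} finite diameter), and that the argument is uniform across all $\ell_p$ metrics because it uses only the two features of $\ell_p$ on $\Z^n$ exploited throughout the section: the diameter bound and the lattice separation property.
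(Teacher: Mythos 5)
Your proposal is correct and follows exactly the route of the paper's (much terser) proof: the paper simply asserts $d(f(x),f(y))<1$ for all $x,y\in X$ and concludes from the $\ell_p$ lattice-separation property, and your displayed estimate $(a+b+c)\,\mathrm{diam}\,X < 1$ is precisely the computation the paper leaves implicit. Nothing further is needed.
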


\begin{proof}
We have
$d(f(x), f(y)) < 1$ for all $x,y \in X$. Since $d$ is an $\ell_p$ 
metric, it follows that $f(x)=f(y)$ for all $x,y \in X$.
\end{proof}

Also, it follows from Proposition~\ref{ReichIsContraction} 
that Theorem~\ref{c1} and
Theorem~\ref{cu} apply to a Reich contraction map.

\subsection{$(\alpha, \kappa)$-uniformly locally 
contractive functions}
Theorem~\ref{HossainFixed} turns out to be a trivial
result for connected digital metric spaces that use an 
$\ell_p$ metric, as we see below.

\begin{thm}
Let $(X,d,\kappa)$ be a digital metric space,
where $d$ is any $\ell_p$ metric. Let
$f: X \to X$ be an $(\alpha, \kappa)$-uniformly
locally contractive function. Then $f$ is a constant function.
\end{thm}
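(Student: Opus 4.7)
The plan is to mimic the reasoning used for Theorems~\ref{c1} and~\ref{cu}, relying on two facts that the $\ell_p$ hypothesis hands us for free: distinct points of $\Z^n$ are at distance at least $1$, and any shrinkage strictly below $1$ in this metric must in fact collapse to $0$.

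The first step is to show that $d(x,y) \le 1$ forces $f(x) = f(y)$. Indeed, by the uniform local contractivity hypothesis,
\[ d(f(x),f(y)) \le \alpha\, d(x,y) \le \alpha < 1, \]
and since $f(x),f(y) \in \Z^n$, the observation in Section~2.2 that $d(x,y) < 1$ implies $x = y$ for $\ell_p$ metrics on $\Z^n$ immediately yields $f(x) = f(y)$.

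The second step is to invoke $\kappa$-connectedness of $X$ (implicit from the framing paragraph preceding the theorem, and the standing hypothesis of Theorem~\ref{HossainFixed}). Fix $x_0 \in X$. For any $x \in X$ there is a $\kappa$-path $x_0,x_1,\ldots,x_m = x$ in $X$ with $x_i \adj_\kappa x_{i+1}$. Provided $\kappa$-adjacency entails $d$-distance at most $1$ (as happens, for instance, with $c_1$-adjacency under any $\ell_p$, or under the $\ell_\infty$ metric with any $c_u$), the first step yields $f(x_i) = f(x_{i+1})$ for every $i$, and chaining gives $f(x) = f(x_0)$. Hence $f$ is constant.

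The main obstacle I anticipate is the compatibility of $\kappa$ with the $\ell_p$ metric: one needs $\kappa$-adjacent pairs to be at $d$-distance at most $1$ so that the local contractivity condition is actually applicable along each step of a $\kappa$-path. This is precisely the flavor of hypothesis made explicit in Theorem~\ref{cu} via the quantities $M_1$ and $M_2$, and it is automatic for the standard adjacency/metric pairings considered in the paper.
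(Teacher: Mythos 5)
Your argument follows essentially the same route as the paper's own proof: first show that pairs at distance at most $1$ are identified by $f$, then chain along a $\kappa$-path using the connectedness hypothesis that is implicit in the surrounding discussion. The obstacle you flag at the end is genuine, and it is in fact present, unresolved, in the paper's proof as well, which reads ``If $d(x_i, x_{i+1}) \le 1$, then \dots'' without ever establishing that consecutive points of the path satisfy this bound; for $c_u$-adjacency with $u>1$ and an $\ell_p$ metric with $p<\infty$, adjacent points can be at distance $u^{1/p}>1$, in which case the local contractivity hypothesis says nothing about them. Indeed, without your compatibility assumption the statement is false as written: on $X=\{(0,0),(1,1)\}\subset\Z^2$ with $c_2$-adjacency and the Euclidean metric, the only pairs at distance at most $1$ are the trivial ones, so every self-map --- including the non-constant swap --- is vacuously $(\alpha,c_2)$-uniformly locally contractive. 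So your version, which makes the hypothesis ``$\kappa$-adjacent implies $d$-distance at most $1$'' explicit (in the spirit of the constants $M_1, M_2$ of Theorem~\ref{cu}), is the correct form of the result, and your anticipated obstacle is not a defect of your write-up but of the theorem statement itself.
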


\begin{proof}
Let $x_0, x \in X$. Since $X$ is connected, there
is a $\kappa$-path in $X$, $\{x_i\}_{i=0}^m$, from $x_0$ to
$x$ such that $x_m=x$ and $x_i \adj_{c_1} x_{i+1}$
for $0 \le i < m$. If $d(x_i, x_{i+1}) \le 1$, then
\[ d(f(x_i),f(x_{i+1})) \le \alpha d(x_i, x_{i+1}) < 1.
\]
Since $d$ is an $\ell_p$ metric, $f(x_i) = f(x_{i+1})$.
It follows that $f$ is a constant function.
\end{proof}

\subsection{Digital expansive mappings}
We saw in Example~\ref{expansiveExl} that a digital
expansive mapping need not be digitally continuous.

Theorem~3.2 and Corollary~3.3, 
of~\cite{JR17} hypothesize 
a digital expansive mapping $T: X \to X$ that is onto.
But in ``real world" image processing, a digital image
is finite, and therefore cannot support such a map,
as shown by the following Theorems~\ref{notOnto1}
and~\ref{notOnto2}.

\begin{thm}
\label{notOnto1}
Let $(X,d,\kappa)$ be a digital metric space of more
than one point. If $X$
has a bounded diameter, then there is no self-map
$T: X \to X$ that is onto and a digital expansive mapping.
\end{thm}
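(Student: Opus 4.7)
My plan is to observe that the \emph{onto} hypothesis is in fact unnecessary: for any digital expansive self-map $T$ of $X$ with expansion constant $k > 1$, I can derive a contradiction from the bounded diameter alone, provided $X$ has more than one point. The key idea is that iterating $T$ blows up distances geometrically, while all iterates remain inside a space of finite diameter.

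First I would pick distinct $x_0, y_0 \in X$, which is possible since $X$ has more than one point; by the metric axioms, $d(x_0, y_0) > 0$. Then I would prove by induction on $n \geq 0$ that $d(T^n(x_0), T^n(y_0)) \geq k^n \, d(x_0, y_0)$. The base case $n = 0$ is trivial, and the inductive step follows immediately by applying the defining inequality $d(T(x), T(y)) \geq k \, d(x,y)$ to $x = T^n(x_0)$ and $y = T^n(y_0)$. Since $T^n(x_0), T^n(y_0) \in X$, each of these distances is at most $\mathrm{diam} \, X < \infty$, and the contradiction is immediate because $k > 1$ and $d(x_0, y_0) > 0$ force $k^n \, d(x_0, y_0) \to \infty$.

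There is essentially no obstacle in this argument; the only mildly subtle point is that one must confirm $T^n$ is well-defined as a self-map of $X$, which is immediate from $T \colon X \to X$. The \emph{onto} hypothesis is included in the statement only because the critique is aimed at Theorem~3.2 and Corollary~3.3 of~\cite{JR17}, which assume it; the argument itself does not use surjectivity, so the theorem could in fact be slightly strengthened.
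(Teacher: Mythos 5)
Your proposal is correct, but it takes a different route from the paper's proof. The paper does not iterate $T$ at all: it chooses a pair $x_0, y_0$ \emph{realizing} the diameter, $d(x_0,y_0) = \mathrm{diam}\, X > 0$, and applies the expansive inequality once to get $d(T(x_0),T(y_0)) \ge k\, \mathrm{diam}\, X > \mathrm{diam}\, X$, an immediate contradiction. Your argument instead starts from an arbitrary distinct pair and shows by induction that $d(T^n(x_0),T^n(y_0)) \ge k^n d(x_0,y_0)$, which must eventually exceed any bound on the diameter. The paper's version is shorter, but it silently relies on the diameter being \emph{attained} (the paper defines $\mathrm{diam}$ with a max, which is automatic for finite $X$ but not for an arbitrary infinite bounded set); your iteration argument needs only that the diameter is bounded, i.e.\ that the supremum of distances is finite, so it is marginally more general. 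Both proofs share the observation you make explicitly: the surjectivity hypothesis is never used, and the statement carries it only because the theorem is aimed at critiquing results elsewhere that assume it.
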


\begin{proof}
Suppose there is a digital expansive mapping
$T: X \to X$. Let $x_0,y_0 \in X$ be such that 
$d(x_0,y_0) = diam X > 0$. Then for some $k >1$,
\begin{equation}
\label{expansiveEq}
     d(T(x_0),T(y_0)) \ge k d(x_0,y_0) = k\,  diam X > diam X.
\end{equation}
Since statement~(\ref{expansiveEq}) is
contradictory, the assertion follows.
\end{proof}

\begin{thm}
\label{notOnto2}
Let $(X,d,\kappa)$ be a digital metric space of more
than one point. If there exist $x_0,y_0 \in X$
such that 
\begin{equation}
\label{minEq}
d(x_0,y_0) = \min\{d(x,y) \, | \, x,y \in X, d(x,y)>0\},
\end{equation}
then there is no self-map
$T: X \to X$ that is onto and a digital expansive mapping.
\end{thm}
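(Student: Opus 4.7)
The plan is to argue by contradiction along the same lines as the proof of Theorem~\ref{notOnto1}, but using the minimality of $d(x_0,y_0)$ among positive distances in place of the diameter bound. Suppose, for contradiction, that $T:X\to X$ is onto and a digital expansive mapping with constant $k>1$.

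First, I would observe that since $X$ has more than one point, the set $\{d(x,y)\mid x,y\in X,\ d(x,y)>0\}$ is nonempty, so the hypothesized minimum is a positive number $m:=d(x_0,y_0)>0$. Because $T$ is onto, I can choose $u,v\in X$ with $T(u)=x_0$ and $T(v)=y_0$. Since $x_0\neq y_0$, necessarily $u\neq v$, so $d(u,v)>0$, and by the minimality property of $m$ we have $d(u,v)\geq m$.

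Next, I would apply the expansive inequality from Definition~\ref{expansive} to the pair $(u,v)$, obtaining
\[
d(x_0,y_0)=d(T(u),T(v))\ \geq\ k\,d(u,v)\ \geq\ k\,m\ =\ k\,d(x_0,y_0).
\]
Since $k>1$ and $d(x_0,y_0)>0$, this yields $d(x_0,y_0)>d(x_0,y_0)$, a contradiction. Therefore no such $T$ can exist, which is exactly the statement of the theorem.

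I do not expect any real obstacle here; the only point requiring a moment of care is to justify that $d(x_0,y_0)>0$ (so that multiplying by $k>1$ produces a strict inequality) and that the preimage points $u,v$ chosen via surjectivity are distinct (so that $d(u,v)$ lies in the set over which $m$ is the minimum). Both follow immediately from the hypothesis that $|X|>1$ together with $d(x_0,y_0)>0$, so the whole argument is a short direct contradiction.
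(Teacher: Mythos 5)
Your argument is correct and is essentially the same as the paper's: both proofs pull back $x_0,y_0$ through the surjection, apply the expansive inequality to the preimages, and contradict the minimality of $d(x_0,y_0)$ among positive distances. You are in fact slightly more careful than the paper in explicitly noting that the preimages $u,v$ must be distinct so that $d(u,v)$ lies in the set being minimized.
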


\begin{proof}
Suppose there exists a map $T: X \to X$ that is onto
and a digital expansive mapping. Let $x_0,y_0 \in X$ be
as in equation~(\ref{minEq}). 
Since $T$ is onto, there exist $x',y' \in X$ such that
$T(x')=x_0$, $T(y')=y_0$. Let $k$ be the expansive constant
of $T$. Then
\[ d(x_0,y_0) = d(T(x'),T(y')) \ge k d(x', y').
\]
Hence $d(x',y') \le d(x_0,y_0) / k < d(x_0,y_0)$,
which contradicts our choice of $x_0,y_0$. 
The assertion follows.
\end{proof}

Note Theorem~\ref{notOnto2} is applicable when $X$ is finite
or when $d$ is any $\ell_p$ metric.

\begin{remark}
{\rm
Example~3.8 of~\cite{JR17} claims that the self-map
$T: X \to X$ of some subset of $\Z$ given by
$T(n)=2n-1$ is onto. It is easy to see that this
claim is only true for $X = \{1\}$.
}
\end{remark}

\subsection{$\beta - \psi - \phi$-expansive
mappings}
An analog of Theorem~2.1 of~\cite{SametEtAl} is asserted as Theorem~3.2 of~\cite{JR18}:

    {\em Let $(X,d,\kappa)$ be a complete digital metric space
    and $T: X \to X$. If there exist functions
    $\beta: X \times X \to [0,\infty)$ and
    $\psi,\phi \in \Psi$ such that
    \begin{itemize}
        \item $T^{-1}$ is $\beta$-admissible;
        \item there exists $x_0 \in X$ such that
              $\beta(x_0, T^{-1}(x_0)) \ge 1$; and
        \item $T$ is digitally continuous,
    \end{itemize}
    then $T$ has a fixed point.}

However, this assertion is false, as we see in the
following.

\begin{exl}
\label{noFix}
Let $X = [-1,1]_{\Z}^2 \setminus \{(0,0)\} \subset \Z^2$.
Let $\beta = d$ be the Manhattan metric on $\Z^2$.
Let $\phi$ and $\psi$ be constant functions with value 0.
Let $f: X \to X$ be the map defined by $f(x,y)=(-x,-y)$.
Then $f$ is $\beta$-admissible; for every $p \in X$ we
have $\beta(p, f^{-1}(p)) = d(p, -p) > 1$; and $f$ is 
both $c_1$-continuous and $c_2$-continuous, but $f$ has no
fixed point.
\end{exl}

\begin{proof}
It was observed in Example~\ref{0inPsi} that the constant
function with value 0 is a member of $\Psi$. It is easy to
see that the assertion follows.
\end{proof}

The following is given as Theorem~3.3 (and, with
another hypothesis, as Theorem~3.7) of~\cite{JR18}.

\begin{thm}
\label{vacuous}
Let $(X,d,\kappa)$ be a complete digital metric space and
let $T: X \to X$ be $\beta - \psi - \phi$-expansive
mapping such that for some sequence
$\{x_n\}_{n=1}^{\infty} \in X$ we have
$\beta(x_n,x_{n+1}) \ge 1$ for all $n$ and
$x_n \rightarrow y \in X$ as $n \rightarrow \infty$, then
there is a subsequence $\{x_{n_k}\}$ of
$\{x_n\}_{n=1}^{\infty}$ such that
$\beta(x_{n_k},y) \ge 1$ for all $k$.
Then $T$ has a fixed point.
\end{thm}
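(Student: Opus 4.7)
The plan is to refute this theorem by producing a counterexample, analogous to Example~\ref{noFix} which disposed of the preceding assertion. The key observation is that the $\beta-\psi-\phi$-expansive hypothesis becomes toothless whenever one is free to choose $\psi$ and $\phi$: exactly as in Proposition~\ref{expansiveNondistinguishing}, the defining inequality of Definition~\ref{expansive3} can be collapsed to the trivially true $0 \ge 0$.

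First I would pick a small complete digital metric space on which to work. The two-point image $X = \{0,1\} \subset \Z$ with any $\ell_p$ metric $d$ and the $c_1$-adjacency is complete by Corollary~\ref{Han-Cauchy-cor}, and the self-map $T: X \to X$ defined by $T(0)=1$, $T(1)=0$ has no fixed point. (Alternatively, Theorem~\ref{singleton} supplies such a $T$ on any non-singleton digital image.)

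Next I would specify the auxiliary data: take $\psi$ and $\phi$ to be the constant zero function (which lies in $\Psi$ by Example~\ref{0inPsi}), and let $\beta: X \times X \to [0,\infty)$ be constantly $1$. Under these choices Definition~\ref{expansive3} reduces to $0 \ge \beta(x,y) \cdot 0 + 0$, so every self-map of $X$ is automatically $\beta-\psi-\phi$-expansive, $T$ included. The sequence hypothesis is then witnessed by the constant sequence $x_n = 0$: it converges to $y = 0 \in X$, satisfies $\beta(x_n,x_{n+1}) = 1 \ge 1$ for all $n$, and any subsequence trivially verifies $\beta(x_{n_k},y) = 1 \ge 1$.

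Finally I would observe that every hypothesis of the stated theorem is satisfied, yet $T$ has no fixed point, so the conclusion fails. The only real ``obstacle'' here is conceptual rather than computational: one must notice that the published hypotheses place no genuine constraint on $T$ once $\psi$ and $\phi$ are permitted to vanish, which is precisely the structural defect that the label \emph{vacuous} is meant to flag.
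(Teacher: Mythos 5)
Your counterexample is correct, but it takes a genuinely different route from the paper --- in fact the paper offers no proof of Theorem~\ref{vacuous} at all, since that statement is quoted from \cite{JR18} only in order to criticize it. The paper's criticism is an example of \emph{vacuity}: choosing $\beta = d$ with $X$ finite or $d$ an $\ell_p$ metric, Corollary~\ref{Han-Cauchy-cor} forces any convergent sequence to be eventually constant, so eventually $\beta(x_{n_k},y) = d(x_{n_k},y) = 0 < 1$ and no sequence can satisfy the hypotheses; the theorem is then true but can never be applied. You instead refute the assertion outright: with $\psi = \phi = 0$ (legitimate by Example~\ref{0inPsi}) and $\beta \equiv 1$, every self-map is $\beta - \psi - \phi$-expansive exactly as in Proposition~\ref{expansiveNondistinguishing}, the constant sequence witnesses all of the sequence hypotheses, and the fixed-point-free swap on a two-point image (or any map supplied by Theorem~\ref{singleton}) violates the conclusion; note that the quoted theorem does not even require $T$ to be continuous, so no continuity check is needed. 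This is precisely the strategy the paper deploys against Theorems~3.2 and~3.6 of \cite{JR18} via Example~\ref{noFix}, redirected at Theorem~3.3. The two criticisms are complementary rather than equivalent: the paper's shows the hypotheses are unsatisfiable for the natural choice $\beta = d$, while yours shows that when the hypotheses \emph{are} satisfiable the conclusion can fail, i.e., the assertion is false as stated. Yours is arguably the stronger indictment, and it is sound; it simply is not the observation the paper chose to make about this particular statement.
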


However, we have the following.

\begin{exl}
If $X$ is finite or $d$ is an $\ell_p$ 
metric, and $\beta = d$, then
Theorem~\ref{vacuous} is vacuously true, as no such
sequence $\{x_n\}_{n=1}^{\infty} \in X$ exists.
\end{exl}

\begin{proof}
By Corollary~\ref{Han-Cauchy-cor}, $x_n \rightarrow y$
implies that for some $k_0$, $k>k_0$ implies
$x_n=y$ and therefore 
$\beta(x_n,y)=d(x_n,y)=0$. Thus, no sequence
$\{x_n\}_{n=1}^{\infty}$ satisfies the hypotheses of
Theorem~\ref{vacuous}.
\end{proof}

\begin{remark}
{\rm
The following assertion is stated as Theorem~3.6
of~\cite{JR18}.}

Let $(X,d,\kappa)$ be a complete digital metric
space. Let $T: X \to X$ be a $\beta - \psi - \phi$-expansive
mapping such that
\[ \psi(d(T(x),T(y))) \ge \beta(x,y) \psi(M(x,y)) +
                          \phi(M(x,y))
\]
for all $x,y \in X$, where
\[ M(x,y) = \max \{d(x,y), d(x,T(x)), d(y,T(y)),
                   \frac{d(x,T(y)) + d(y, T(x))}{2}\}.
\]
Then $T$ has a fixed point.

{\rm But this assertion is false.
}
\end{remark}

\begin{proof}
It is easy to see that the choices of $X,d,\kappa,T,\beta,\psi,\phi$ of
Example~\ref{noFix} provide a counterexample to the assertion.
\end{proof}

\subsection{Remarks on~\cite{Dol-Nal}}
The paper~\cite{Dol-Nal} identifies one of the authors
of the current paper, L. Boxer, as a reviewer. In fact, errors and other
shortcomings mentioned in Boxer's review 
remain in the published version of~\cite{Dol-Nal}.

The assertion stated as Theorem~3.1
of~\cite{Dol-Nal} is the following.
\begin{quote}
    {\em Let $(X,d,\kappa)$ be a complete
         metric space such that $T: X \to X$
         satisfies $d(T(x),T(y)) \le \psi(d(x,y))$ for all $x,y \in X$,
         where $\psi: [0,\infty) \to [0,\infty)$ is monotone
         nondecreasing and
         $\psi^n(t) \rightarrow 0$ as
         $n \rightarrow \infty$. Then $T$
         has a unique fixed point.
    }
\end{quote}

The argument offered in proof of this assertion confuses topological
continuity (the ``$\varepsilon - \delta$ definition")
and digital continuity (preservation of connectedness) in order to conclude that
$T$ is continuous. However, in
Example~\ref{counter}, using $\psi(t)=t/2$,
we have a function that satisfies the
hypotheses above and is not digitally
continuous.

A similar flaw appears in the argument given
in proof of the assertion states as
Theorem~3.3 of~\cite{Dol-Nal}, where again
Example~\ref{counter} provides a counterexample to the claim that a
weakly uniformly strict digital contraction mapping is digitally continuous.

Therefore, we must regard the assertions stated as Theorems~3.1 and~3.3
of~\cite{Dol-Nal}
as unproven. Since these and assertions dependent
on these make up all of the new assertions of the paper,
we conclude that nothing new is correctly 
established in~\cite{Dol-Nal}.

\section{Common fixed points of intimate maps}
The paper~\cite{Jain} obtains a result
for common fixed points of intimate maps.
We show in this section that the
characterization of intimate maps given 
in~\cite{Jain} can be simplified, and that
the primary result of~\cite{Jain} is
rather limited.

\begin{definition}
{\rm \cite{Jain}}
\label{intimateDef}
Let $(X,d,\kappa)$ be a digital metric
space. Let $f,g: X \to X$. Let $\alpha$
be either the $lim \, inf$ or the
$lim \, sup$ operation. If for
every $\{x_n\}_{n=1}^{\infty} \subset X$ 
such that
\begin{equation}
\label{intimateLim}
\lim_{n \to \infty} f(x_n) =
   \lim_{n \to \infty} g(x_n) = t
\end{equation}
for some $t \in X$ we have,
for $n$ sufficiently large,
\begin{equation}
\label{intimateEq}
     \alpha \, d(g(f(x_n)), g(x_n)) \le
   \alpha \, d(f(f(x_n)), f(x_n))
\end{equation}
then we say $f$ is {\em $g$-intimate}.
\end{definition}

\begin{prop}
Let $(X,d,\kappa)$ be a digital metric
space, where $d$ is any $\ell_p$ metric.
Let $f,g: X \to X$. Then $f$ is
$g$-intimate if and only if for every
sequence $\{x_n\}_{n=1}^{\infty} \subset X$
satisfying statement~(\ref{intimateLim})
we have, for $n$ sufficiently large,
\[ d(g(t), t) \le d(f(t),t).
\]
\end{prop}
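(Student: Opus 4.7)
The plan is to reduce both the hypothesis and conclusion to the same finite-stage comparison, by exploiting Corollary~\ref{Han-Cauchy-cor}: in any digital metric space with an $\ell_p$ metric, a convergent sequence is eventually equal to its limit. This collapses the $\liminf / \limsup$ in Definition~\ref{intimateDef} to an eventual equality.

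First I would fix any sequence $\{x_n\}_{n=1}^{\infty} \subset X$ satisfying statement~(\ref{intimateLim}), so that $f(x_n) \to t$ and $g(x_n) \to t$. By Corollary~\ref{Han-Cauchy-cor} applied to the $\ell_p$ metric $d$, there exists $N \in \N$ such that for all $n \ge N$ we have simultaneously $f(x_n) = t$ and $g(x_n) = t$. Substituting these equalities, for $n \ge N$,
\[
d(g(f(x_n)), g(x_n)) = d(g(t), t)
\qquad\text{and}\qquad
d(f(f(x_n)), f(x_n)) = d(f(t), t).
\]
Because both sequences are eventually constant, applying $\alpha \in \{\liminf, \limsup\}$ changes nothing:
$\alpha\, d(g(f(x_n)), g(x_n)) = d(g(t),t)$ and $\alpha\, d(f(f(x_n)), f(x_n)) = d(f(t),t)$.

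With this identification in hand, both directions of the biconditional are immediate. For the forward direction, if $f$ is $g$-intimate, then inequality~(\ref{intimateEq}) gives $d(g(t),t) \le d(f(t),t)$ for every sequence satisfying~(\ref{intimateLim}), which is the claimed condition (trivially valid for all sufficiently large $n$, since the inequality does not depend on $n$). For the converse, assume $d(g(t),t) \le d(f(t),t)$ holds whenever~(\ref{intimateLim}) holds; then for $n \ge N$ the displayed identities turn this directly into~(\ref{intimateEq}), so $f$ is $g$-intimate.

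There is no real obstacle: the entire content of the proposition is the observation that $\ell_p$ convergence in a digital image is eventual equality, so the definition of $g$-intimacy degenerates to a single inequality about $f(t)$, $g(t)$, and $t$. The only care needed is to note that $\alpha$ applied to an eventually constant sequence returns that constant, which handles both the $\liminf$ and $\limsup$ cases uniformly.
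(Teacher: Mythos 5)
Your proposal is correct and follows essentially the same route as the paper: both reduce the statement to the observation (via Theorem~\ref{Han-Cauchy} / Corollary~\ref{Han-Cauchy-cor}) that a convergent sequence in a digital metric space with an $\ell_p$ metric eventually equals its limit, so that $f(x_n)=g(x_n)=t$ for large $n$ and the $\liminf/\limsup$ in Definition~\ref{intimateDef} collapses to the single inequality $d(g(t),t)\le d(f(t),t)$. Your write-up merely spells out the substitution step that the paper leaves as ``the assertion follows easily.''
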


\begin{proof}
From Theorem~\ref{Han-Cauchy}, a
sequence $\{x_n\}_{n=1}^{\infty} \subset X$
satisfying statement~(\ref{intimateLim}) has,
for $n$ sufficiently large,
$f(x_n)=g(x_n)=t$. The assertion follows
easily.
\end{proof}

\begin{thm}
{\rm ~\cite{Jain}}
\label{jainThm}
If $(X,d,\kappa)$ is a digital
    metric space and
    $A,B,S,T: X \to X$ are such that
    
    a) $S(X) \subset B(X)$ and
              $T(X) \subset A(X)$;
              
    b) for some $\alpha \in (0,1)$ and
              all $x,y \in X$, 
    \begin{equation}
    \label{STFineq}
    d(S(x),T(y)) \le \alpha \, F(x,y)
    \end{equation}
    where
\[  F(x,y) =
   \max \{d(A(x),B(y)), d(A(x),S(x)),
   d(B(y),T(y)),\]
\[   ~~~d(S(x),B(y)), d(A(x),T(y))\};
\]

c) $A(X)$ is complete;

d) $S$ is $A$-intimate and
          $T$ is $B$-intimate,

\noindent  then $A$, $B$, $S$, and $T$ have a
    unique common fixed point.
\end{thm}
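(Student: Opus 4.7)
The plan is to run a Jungck-style iteration analogous to the classical common-fixed-point proofs for pairs of commuting or compatible maps. First I would use hypothesis (a) to pick $x_0 \in X$ and then inductively choose $x_{2n+1}$ with $B(x_{2n+1}) = S(x_{2n})$ and $x_{2n+2}$ with $A(x_{2n+2}) = T(x_{2n+1})$, setting $y_{2n} = S(x_{2n}) = B(x_{2n+1})$ and $y_{2n+1} = T(x_{2n+1}) = A(x_{2n+2})$. Substituting $(x,y) = (x_{2n}, x_{2n+1})$ into (\ref{STFineq}), several of the five entries of $F$ coincide, so $F(x_{2n},x_{2n+1}) = \max\{d(y_{2n-1},y_{2n}),\, d(y_{2n},y_{2n+1}),\, d(y_{2n-1},y_{2n+1})\}$; case-splitting on which entry attains the maximum and using the triangle inequality $d(y_{2n-1},y_{2n+1}) \le d(y_{2n-1},y_{2n}) + d(y_{2n},y_{2n+1})$ gives $d(y_n,y_{n+1}) \le c\, d(y_{n-1},y_n)$ for a suitable contraction constant $c < 1$, and hence $\{y_n\}$ is Cauchy.

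Next, since $\{y_{2n+1}\} = \{A(x_{2n+2})\}$ lies in the complete subspace $A(X)$, it converges in $A(X)$ to some $t$, and by Cauchy-ness the entire sequence $\{y_n\}$ converges to $t$. (If $d$ happens to be an $\ell_p$ metric, Theorem~\ref{Han-Cauchy} implies that this convergence is realized by $\{y_n\}$ being eventually constant at $t$, which will make the next step essentially immediate.)

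To recover the common fixed point, I would exploit (a) twice to lift $t$ into preimages. Since $t \in A(X)$, pick $z$ with $A(z) = t$ and apply (\ref{STFineq}) to $(z, x_{2n+1})$, passing to the limit to force $S(z) = t$; then $t = S(z) \in S(X) \subset B(X)$, so pick $w$ with $B(w) = t$ and a symmetric application of (\ref{STFineq}) to $(x_{2n}, w)$ yields $T(w) = t$. Now the $A$-intimacy of $S$ applied to the constant sequence $x_n = z$, which satisfies (\ref{intimateLim}) with limit $t$, together with the simplified characterization from the preceding proposition, gives $d(A(t), t) \le d(S(t), t)$; the dual argument with $T$ and $B$ gives $d(B(t),t) \le d(T(t),t)$. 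A final application of (\ref{STFineq}) at $(t,w)$, $(z,t)$, and $(t,t)$ collapses all four values $A(t), B(t), S(t), T(t)$ to $t$, and uniqueness follows by applying (\ref{STFineq}) to a hypothetical second common fixed point.

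The main obstacle is the trichotomy in the Cauchy step: the case in which $d(y_{2n-1}, y_{2n+1})$ attains the maximum only yields an estimate with ratio $\alpha/(1-\alpha)$, which is below one precisely when $\alpha < 1/2$. This is a standard gap in Jungck-type statements of this form; I expect that the proof must either restrict to $\alpha < 1/2$ or supply a more delicate triangle-inequality bookkeeping than the one sketched above. A secondary subtlety, harmless in the $\ell_p$ setting thanks to Theorem~\ref{Han-Cauchy}, is justifying the limit-passage in (\ref{STFineq}) when the metric $d$ is arbitrary and no continuity of $S,T,A,B$ is assumed.
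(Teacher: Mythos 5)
The first thing to say is that the paper contains no proof of Theorem~\ref{jainThm} for you to be compared against: the theorem is quoted verbatim from \cite{Jain} as background, and the paper's own contribution in that section is the \emph{subsequent} proposition showing that, for an $\ell_p$ metric, the hypotheses force $S$ and $T$ to be constant with a common value (and in one case force $X$ to be a singleton). The authors' stance toward the result is skeptical rather than affirmative, so your task was really to reconstruct the argument of the cited source, and your Jungck-style iteration is indeed the expected template.

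As a proof, however, your sketch has a genuine gap, and it is the one you yourself flag --- but it is worse than you indicate, because it recurs. Since $F(x,y)$ contains the mixed terms $d(S(x),B(y))$ and $d(A(x),T(y))$ individually rather than their average, the Cauchy step in the case where $d(y_{2n-1},y_{2n+1})$ attains the maximum yields only $d(y_{2n},y_{2n+1}) \le \frac{\alpha}{1-\alpha}\, d(y_{2n-1},y_{2n})$, a contraction only for $\alpha < 1/2$, while the theorem claims all $\alpha \in (0,1)$. The same obstruction reappears at the end: when you apply~(\ref{STFineq}) at $(t,w)$ to collapse $S(t)$ to $t$, the term $d(A(x),S(x))$ must be bounded via $d(A(t),S(t)) \le d(A(t),t)+d(t,S(t)) \le 2\,d(S(t),t)$, giving $d(S(t),t) \le 2\alpha\, d(S(t),t)$, which is again vacuous for $\alpha \ge 1/2$. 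To cover the full range $\alpha \in (0,1)$ one needs the \'Ciri\'c quasi-contraction technique (bounding the diameter of the whole orbit by $\frac{1}{1-\alpha}$ times the first step, rather than telescoping successive differences); your hedge that ``more delicate triangle-inequality bookkeeping'' might be needed is exactly right, but the proposal does not supply it, so as written it proves only the $\alpha<1/2$ case. Your secondary worry about passing to the limit in~(\ref{STFineq}) without continuity assumptions is dissolved in the setting the paper cares about, since by Theorem~\ref{Han-Cauchy} convergent sequences are eventually constant --- but in that same setting the paper's proposition shows the theorem is essentially contentless anyway.
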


But Theorem~\ref{jainThm} is limited, as shown
by the following.

\begin{prop}
Suppose we assume the hypotheses of
Theorem~\ref{jainThm}, with $d$ being
an $\ell_p$ metric. Suppose
\begin{equation}
\label{alphaBound} 
\alpha < \min \{1 / F(x,y) \, | \, x,y \in X \}
\end{equation}
or
\begin{equation}
\label{diamBound} 
diam(S(X) \cup T(X)) = diam \, X < \infty.
\end{equation}
Then $S$ and $T$ are constant functions that 
have the same value, and, in
case~(\ref{diamBound}), $X$ is a singleton.
\end{prop}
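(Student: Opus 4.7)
The plan is to split on the two hypotheses, both arguments resting on inequality~(\ref{STFineq}) from Theorem~\ref{jainThm} together with the fact that, under any $\ell_p$ metric on $\Z^n$, distinct points are at distance at least~$1$.

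For case~(\ref{alphaBound}), the assumption $\alpha<1/F(x,y)$ gives $\alpha F(x,y)<1$, so~(\ref{STFineq}) yields $d(S(x),T(y))<1$ and hence $S(x)=T(y)$ for every $x,y\in X$. Fixing $y$ and then $x$ shows that both $S$ and $T$ are constant with a common value.

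For case~(\ref{diamBound}), write $D=\operatorname{diam}X$ and let $p$ be the common fixed point granted by Theorem~\ref{jainThm}, so $A(p)=B(p)=S(p)=T(p)=p$. Every distance in $F(x,y)$ is between points of $X$, so $F(x,y)\le D$ and hence $d(S(x),T(y))\le\alpha D$. Substituting $y=p$ and $x=p$ in turn shows that every point of $S(X)\cup T(X)$ lies within $\alpha D$ of $p$, so $\operatorname{diam}(S(X)\cup T(X))\le 2\alpha D$. Combined with the hypothesis $\operatorname{diam}(S(X)\cup T(X))=D$, this yields $D\le 2\alpha D$; when $\alpha<\tfrac12$ this forces $D=0$, and $X=\{p\}$ is a singleton, whence $S,T$ are trivially constant with value $p$.

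The principal obstacle is the residual range $\alpha\in[\tfrac12,1)$, where the factor-of-two slack from triangulating through $p$ is insufficient. I would close this gap by selecting a diameter-realizing pair $(u,v)\in S(X)\cup T(X)$: a mixed pair $u=S(x),\,v=T(y)$ gives the immediate contradiction $D=d(u,v)\le\alpha F(x,y)\le\alpha D<D$, so the pair must lie entirely within $S(X)$ or within $T(X)$. Invoking condition~(a) to write, say, $v=S(x_2)=B(w)$, the distance $d(u,v)$ appears as the term $d(S(x_1),B(w))$ inside $F(x_1,w)$, forcing $F(x_1,w)=D$; substituting back into~(\ref{STFineq}) and chaining such rewrites should tighten the bound to an outright contradiction. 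This bookkeeping is the delicate step I expect to occupy the bulk of the work.
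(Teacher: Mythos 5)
Your treatment of case~(\ref{alphaBound}) is exactly the paper's: the hypothesis makes $\alpha F(x,y)<1$, so (\ref{STFineq}) gives $d(S(x),T(y))<1$, hence $S(x)=T(y)$ for all $x,y$ because distinct points of $\Z^n$ are at $\ell_p$-distance at least $1$, and both maps are constant with a common value. No issue there.

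Case~(\ref{diamBound}) is where your proposal does not close. The paper's own argument is one line: it reads (\ref{diamBound}) as supplying $x_0,y_0\in X$ with $d(S(x_0),T(y_0))=\operatorname{diam}X$, i.e.\ it takes the diameter of $S(X)\cup T(X)$ to be attained by a \emph{mixed} pair; then $F(x_0,y_0)\le \operatorname{diam}X$ and (\ref{STFineq}) give $\operatorname{diam}X\le\alpha\,\operatorname{diam}X$, forcing $\operatorname{diam}X=0$ with no factor of $2$ anywhere. Your detour through the common fixed point $p$ costs you exactly that factor of $2$, and you correctly note that $D\le 2\alpha D$ is useless for $\alpha\in[\tfrac12,1)$. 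You have correctly isolated the delicate point --- a mixed diameter-realizing pair kills the argument instantly, so the only problem is a pair lying inside $S(X)$ or inside $T(X)$ --- but your proposed repair does not work as sketched: writing $v=S(x_2)=B(w)$ via hypothesis (a) forces $F(x_1,w)=D$ and hence $d(S(x_1),T(w))\le\alpha D$, which bounds the distance from $u$ to $T(w)$, not from $u$ to $v=B(w)$; recombining via the triangle inequality with $d(T(w),B(w))\le D$ yields nothing better than $D\le(1+\alpha)D$. As written, your proposal therefore establishes the (\ref{diamBound}) case only for $\alpha<\tfrac12$. To match the stated proposition you need the diameter-realizing pair in the mixed form $d(S(x_0),T(y_0))=\operatorname{diam}X$, which is precisely what the paper's proof extracts from statement~(\ref{diamBound}); once you have that, your own observation that all mixed distances are at most $\alpha D<D$ finishes the argument immediately.
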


\begin{proof}
Inequality~(\ref{alphaBound}) and 
hypothesis b) of Theorem~\ref{jainThm}
imply that $d(S(x),T(y)) < 1$, hence
$S(x)=T(y)$ for all $x, y \in X$.
Since $d$ is an $\ell_p$
metric, we have, for some $x_0, y_0 \in X$,
$S(x_0)=T(y)$ and $S(x)=T(y_0)$ for all
$x, y \in X$. Hence $S$ and $T$ are constant
functions with the same values.

Statement~(\ref{diamBound}) implies there
exist $x_0,y_0 \in X$ such that
$d(S(x_0),T(y_0)) = diam \, X$. Since
$F(x_0,y_0) \le diam \, X$,
inequality~(\ref{STFineq}) becomes
$diam \, X \le \alpha \, diam \, X$, which
implies $diam \, X = 0$. Thus, $X$ is a
singleton, so $S$ and $T$ are constant
functions with the same values.
\end{proof}

\section{Homotopy invariant fixed point theory}
It is natural in topology to consider the behavior of the fixed point set when a continuous function is changed by homotopy. In classical topology for nice spaces (for example the geometric realization of any finite simplicial complex), it is trivial to change a function by homotopy to increase the number of fixed points. The more interesting question is whether or not the number of fixed points can be decreased by homotopy.

The following Proposition~\ref{noFixed} was the key to
the proof of Theorem~\ref{singleton}.

\begin{prop}
{\rm \cite{Bx-etal}}
\label{noFixed}
Let $(X,\kappa)$ be a connected digital image of more than
one point. Let $x_0 \adj_{\kappa} x_1$ in $X$. Then the function $g: X \to X$ defined by
\[g(x) = \left \{ \begin{array}{cc}
    x_0 & \mbox{if } x \ne x_0; \\
     x_1 & \mbox{if } x = x_0,
\end{array} \right .
\]
is continuous and has no fixed points.
\end{prop}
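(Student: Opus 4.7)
The plan is to verify the two claims separately and directly from the definitions, since $g$ is defined by cases and both the absence of fixed points and continuity reduce to checking a small number of cases.

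First, I would dispose of the fixed-point claim. If $x \ne x_0$ then $g(x) = x_0 \ne x$ by the defining case, so $x$ is not fixed. If $x = x_0$ then $g(x_0) = x_1$, and $x_1 \ne x_0$ because $x_0 \adj_{\kappa} x_1$ forces $x_0 \ne x_1$ (adjacency requires distinct points). Hence $g$ has no fixed point.

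Next I would prove continuity via the edge-wise criterion: whenever $x \adj_{\kappa} y$, I need $g(x) = g(y)$ or $g(x) \adj_{\kappa} g(y)$. Suppose $x \adj_{\kappa} y$, so in particular $x \ne y$. There are two cases. If neither $x$ nor $y$ equals $x_0$, then $g(x) = x_0 = g(y)$, and we are done. Otherwise exactly one of $x, y$ equals $x_0$; by symmetry assume $x = x_0$, so $y \ne x_0$. Then $g(x) = x_1$ and $g(y) = x_0$, and $g(x) \adj_{\kappa} g(y)$ by the hypothesis $x_0 \adj_{\kappa} x_1$ (adjacency is symmetric). This exhausts the cases, so $g$ is $(\kappa,\kappa)$-continuous.

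There is no real obstacle; the only subtlety worth flagging is that the definition of $g$ never needs to mention $x_1$ for $x \ne x_0$, so connectedness of $X$ and the existence of more than one point are only used implicitly to guarantee that a pair $x_0 \adj_{\kappa} x_1$ exists in the first place. The argument itself is a two-case check against the edge characterization of continuity stated earlier in the paper.
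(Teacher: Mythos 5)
Your proof is correct: the no-fixed-point claim is immediate from the case definition and the fact that adjacent points are distinct, and the continuity check via the edge-wise criterion ($x \adj_{\kappa} y$ implies $g(x)=g(y)$ or $g(x)\adj_{\kappa} g(y)$) covers all cases. The paper itself states this proposition as a citation to an earlier work and gives no proof, but your direct verification is exactly the standard argument, and your remark that connectedness and $|X|>1$ serve only to guarantee the existence of an adjacent pair $x_0 \adj_{\kappa} x_1$ is accurate.
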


\begin{prop}
\label{htpNoFixed}
Let $(X,\kappa)$ be a connected digital image of more than
one point. Then any constant map $f: X \to X$ is homotopic
to a map without fixed points.
\end{prop}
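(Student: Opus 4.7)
The plan is to combine Proposition~\ref{noFixed} with a standard ``slide the constant along a path'' homotopy. Let $f \colon X \to X$ be the constant map with value $c \in X$, and choose an adjacent pair $x_0 \adj_{\kappa} x_1$ in $X$ (which exists because $X$ is connected and has more than one point). Let $g \colon X \to X$ be the fixed-point-free map provided by Proposition~\ref{noFixed} for this choice of $x_0, x_1$. I will build a homotopy from $f$ to $g$ in two stages and then concatenate.

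In the first stage, I would homotope the constant map at $c$ to the constant map at $x_0$. Since $X$ is $\kappa$-connected, there is a $\kappa$-path $c = p_0, p_1, \ldots, p_m = x_0$. Define $F \colon X \times [0,m]_{\Z} \to X$ by $F(x, t) = p_t$. Each slice $F_t$ is constant, hence $(\kappa,\kappa)$-continuous, and each $F_x$ traces the path $p_0, \ldots, p_m$, which is $(2,\kappa)$-continuous by construction. So $f \simeq h$, where $h(x) = x_0$.

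In the second stage, I would homotope $h$ directly to $g$ in a single step. Define $G \colon X \times [0,1]_{\Z} \to X$ by $G(x, 0) = x_0$ and $G(x, 1) = g(x)$. Continuity of $G_0 = h$ is immediate; continuity of $G_1 = g$ is given by Proposition~\ref{noFixed}. For each $x \in X$, I need $G_x(0)$ and $G_x(1)$ to be equal or $\kappa$-adjacent: if $x \ne x_0$ then $g(x) = x_0 = G_x(0)$, so they are equal; if $x = x_0$ then $g(x_0) = x_1 \adj_{\kappa} x_0$, so they are adjacent. Hence $h \simeq g$.

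Concatenating the two homotopies (a routine operation in the digital homotopy setting) yields $f \simeq g$, and $g$ has no fixed points. There is no serious obstacle here: the only thing to watch is that the definition of digital homotopy is satisfied slice-by-slice and fiber-by-fiber, which both constructions above handle by inspection.
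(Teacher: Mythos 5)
Your proof is correct and uses essentially the same idea as the paper: a one-step homotopy from the constant map at $x_0$ to the fixed-point-free map $g$ of Proposition~\ref{noFixed}, checked slice-by-slice and fiber-by-fiber. Your first stage (sliding the constant value $c$ along a path to $x_0$) is harmless but unnecessary, since in a connected image of more than one point every point has a $\kappa$-neighbor, so one may simply take $x_0 = c$ and choose $x_1$ adjacent to it, which is what the paper does.
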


\begin{proof}
Let $x_0, x_1 \in X$ with $x_0 \adj_{\kappa} x_1$. Let
$f: X \to X$ be the constant map with image $\{x_0\}$.
Let $H: X \times [0,1]_{\Z} \to X$ be defined by
$H(x,0)= x_0$; $H(x,1)=x_0$ for $x \ne x_0$; $H(x_0,1)=x_1$.
It is easy to see that $H$ is a homotopy from $f$ to a
function $g$ as in Proposition~\ref{noFixed} without fixed points.
\end{proof}

Let $\MF(f)$ be the minimal number of fixed points among all continuous functions homotopic to $f$. For example, if $X$ has only 1 point then clearly $\MF(f) = 1$. If $X$ has more than 1 point and $(X,\kappa)$ is contractible, then any continuous function $f:X\to X$ is homotopic to a constant 
function, and it follows from Proposition~\ref{htpNoFixed} that $\MF(f) = 0$. 

Several examples are given in \cite{hmps} of digital images $(X,\kappa)$ for which no function on $X$ is homotopic to the identity except for the identity itself. Such images
are called \emph{rigid}. For example, a wedge product of two loops, each having at least 5 points, is rigid. Clearly if $X$ is a rigid digital image having $n$ points and $\id$ denotes the identity function, then $\MF(\id) = |X|$.

In classical topology, $\MF(f)$ can often be computed by Nielsen fixed point theory; see \cite{jiang}. Each fixed point is assigned an integer-valued \emph{fixed point index}, which can be computed homologically. 
When $x$ is an isolated fixed point of $f$, the fixed point index of $x$ is denoted $\ind(f,x)$. 
This index is a sort of multiplicity count for the fixed point: when $\ind(f,x)= 0$ then the fixed point at $x$ can be removed by a homotopy. 

The fixed point index is homotopy invariant in the following sense: if, during some homotopy $f\simeq g$, the fixed point $x$ of $f$ moves into a fixed point $y$ of $g$, then $\ind(f,x) = \ind(g,x)$. Furthermore, when the fixed point set of $f$ is finite, then the sum of all the fixed point indices equals the Lefschetz number $L(f)$. In this sense $\ind(f,x)$ provides a localized version of the Lefschetz number. 

In Nielsen fixed point theory, the fixed points are grouped into \emph{Nielsen classes}, and the number of such classes having nonzero index sum is the \emph{Nielsen number} $N(f)$. This number is a homotopy invariant satisfying $N(f) \le \MF(f)$, and in many cases (for example when $X$ is a manifold of dimension different from 2), $N(f) = \MF(f)$.

The 2012 paper~\cite{eklefschetz} by Ege \& Karaca attempts to develop a Lefschetz fixed point theorem for digital images, but the main result is incorrect, and was retracted in the 2016 paper \cite{Bx-etal}. The same authors attempted to develop a Nielsen theory in the 2017 paper \cite{EgeKaraca17} based on their faulty Lefschetz theory. The theory developed in \cite{EgeKaraca17} is also incorrect.

The main problem in \cite{EgeKaraca17} is inherited from problems in \cite{eklefschetz}, and concerns the definition of the fixed point index. Definition 3.2 of \cite{EgeKaraca17} does not give a satisfactory definition of the function $F$, and the degree used is inadequate because the appropriate homology groups are not necessarily $\Z$. The authors claim to define an integer valued Nielsen number $N(f)$ which is a homotopy invariant (Theorem 3.6) and a lower bound for $\MF(f)$ (Theorem 3.7). 
Their Example 3.4, claiming that if $f$ is a 
constant then $N(f)=1$, yields a contradiction for connected images $X$ such that $|X|>1$, since our Proposition~\ref{htpNoFixed} implies that $MF(f)=0$ for such functions $f$.

The errors in this work are not merely mistakes but indicate fundamental flaws in the theory. Anything resembling the standard homological definitions of $L(f)$ and the fixed point index will require that the Lefschetz number and fixed point index of the constant map equal 1. This cannot be reconciled with the fact that, when $X$ has more than 1 point, the constant map can be changed by homotopy to have no fixed points.

The authors believe that any successful theory for computing $\MF(f)$ will involve techniques very different from classical Lefschetz and Nielsen theory. The setting of digital images also allows the study of the quantity $\XF(f)$, the maximum number of fixed points among all functions homotopic to $f$. In classical topological fixed point theory this number is typically infinite, but for a digital image with $n$ points, clearly $\XF(f) \le n$ for any $f$. In fact our definition of $\XF(f)$ implies that $f$ is homotopic to the identity if and only if $\XF(f)=n$. When $f$ is a constant, then $1 \le \XF(f)\le n$, and for many choices of the image $X$ we will have $\XF(f) < n$. We do not know if it is possible for $\XF(f)=0$ for any function on a connected digital image.

Although many of the concepts discussed in this paper turn out to be trivial or otherwise uninteresting, the questions of computing $\MF(f)$ and $\XF(f)$ seem to be difficult and interesting, and present opportunities for further work. 
Variations that count approximate fixed points would also be interesting objects of study.

\section{Concluding remarks}
Although the study of fixed points, or approximate fixed
points, is important in digital topology as in other
branches of mathematics, it does not appear that the use
of metric spaces yields useful knowledge in this area. We
have seen that metric space functions introduced to 
study fixed points in digital topology - digital contraction
maps, Kannan contraction maps, Chatterjea contraction maps, 
Zamfirescu contraction maps, Rhoades contraction maps,
Reich contraction maps, uniformly locally 
contractive functions, intimate functions
- often turn out to be either discontinuous 
or constant - hence, arguably uninteresting - 
when the image considered is finite or when 
common metrics are used.

It appears to us that the most natural metric function
to use for a connected digital image $(X,\kappa)$ is
the path length metric~\cite{Han05}:
$d(x,y)$ is the length of a shortest $\kappa$-path from
$x$ to $y$. Since this metric reflects
$\kappa$, it seems far superior to
an $\ell_p$ metric on a digital image. However, even this
metric gives us little new information. Since it is
integer-valued, its Cauchy sequences are also eventually
constant.

We have also corrected errors and pointed out trivialities  
in other papers concerned with fixed points or approximate 
fixed points of continuous self-maps of digital images.

\end{document}